
\documentclass[a4paper,11pt,reqno]{amsart}




\usepackage{amssymb}
\usepackage{amsmath}
\newtheorem{definition}{Definition}
\newtheorem{thm}{Theorem}
\newtheorem{lem}[thm]{Lemma}
\newtheorem{prop}[thm]{Proposition}

\newtheorem{rmk}{Remark}







\begin{document}






\title{Steady 3D viscous compressible  flows
with adiabatic exponent $\gamma\in (1,\infty)$}
\author{P.I. Plotnikov}
\address{Lavrentyev Institute  of Hydrodynamics,
Lavrentyev pr. 15, 630090 Novosibirsk, Russia}
\email{plotnikov@hydro.nsc.ru}

\author{W. Weigant}
\address{Universitat Bonn, Endenicher Allee 60, D-53115 Bonn, Germany}
\email{w-weigant@t-online.de}


\begin{abstract}
The Navier-Stokes equations for compressible barotropic  flow in the
stationary three dimensional case are considered. It is assumed that
a fluid occupies a bounded domain and satisfies the no-slip boundary
condition. The existence of a weak solution  under the assumption
that the adiabatic exponent satisfies $\gamma>1$ is proved. These
results cover the cases of monoatomic, diatomic, and polyatomic
gases.
\end{abstract}

\keywords{
Compressible fluids, No-slip boundary condition, Navier-Stokes
equations, Weak solutions\\
{\bf Mathematical subject classification (2000)}: 76N10, 35Q30 }

\maketitle 

\section{Introduction}
\label{inna}
\renewcommand{\theequation}{1.\arabic{equation}}
\setcounter{equation}{0}
 We deal with  a stationary  boundary value
problem for the compressible Navier-Stokes equations. It is assumed
that a compressible fluid occupies a bounded domain $\Omega\subset
\mathbb R^3$ with $C^2$ boundary. The state of the fluid is
characterized completely by the density $\varrho(x)\geq 0$ and the
velocity $\mathbf u(x)$. The governing equations represent two basic
principles of fluid mechanics:  the mass balance
\begin{subequations}\label{inna1}
\begin{equation}
\label{inna1b} \text{div~}(\varrho \mathbf
u)=0\quad\text{~in~}\Omega,
\end{equation}
and the balance of momentum
\begin{equation}\label{inna1a}
 \text{div~}(\varrho\, \mathbf u\otimes\mathbf u)
+\nabla p(\rho)= \text{div~} \mathbb S(\mathbf u)+\varrho\, \mathbf
f\quad\text{in~}\Omega.\end{equation} Here  $\mathbf f\in
L^\infty(\Omega)$ is a given vector field, the viscous stress tensor
$\mathbb S$ and the pressure $p$ are given by
\begin{equation}\label{inna2}
    \mathbb S(\mathbf u)=\mu\big(\nabla\mathbf{u}+\nabla\mathbf{u}^\top-
    \frac{2}{3}\text{div~}\mathbf u\mathbb I)
    +\nu
    \text{div~}\mathbf{u}\mathbb I, \quad p=\varrho^\gamma,
\end{equation}
the viscosity coefficients $\mu, \nu$ satisfy the inequalities
$\mu+\frac{1}{3}\nu>0$, $\mu>0$. Throughout  the paper, we assume
that $\gamma>1$ is an arbitrary constant. Notice that the standard
values of the adiabatic exponent $\gamma$ are 5/3 for the monoatomic
gas, between 9/11 and 7/5 for the diatomic gas, and between 1 and
4/3 for the polyatomic gas, see \cite{JN}.
 Equations \eqref{inna1b}--\eqref{inna1a}
should be supplemented with boundary conditions. We assume that the
velocity field satisfies the no-slip boundary condition on
$\partial\Omega$, and that the total mass of the fluid is
prescribed:
\begin{gather}\label{inna1ca}
    \mathbf u=0\quad\text{on~}\partial \Omega, \\\label{inna1cb}
     \int_\Omega \varrho\, dx=M.
\end{gather}
\end{subequations}
 Our goal is to prove that problem \eqref{inna1} has at least one
 weak solution.  By a
weak solution we mean a couple $(\mathbf u, \varrho)\in
W^{1,2}_0(\Omega)\times L^\gamma(\Omega)$ such that the integral
identities
\begin{equation}\label{inna6}
    \int_\Omega\big(\varrho\mathbf u\otimes\mathbf u:\nabla\boldsymbol
    \xi+p(\varrho) \text{div~}\boldsymbol\xi-\mathbb
    S(\mathbf u):\nabla\boldsymbol\xi+\varrho\mathbf f\cdot \boldsymbol\xi\big)\,
    dx=0,
\end{equation}
\begin{equation}\label{inna7}
    \int_\Omega \varrho\mathbf u\cdot\nabla\zeta\, dx=0
\end{equation}
hold for all vector fields $\boldsymbol\xi\in C^\infty_0(\Omega)$
and all $\zeta\in C^\infty(\Omega)$.

The first nonlocal  results concerning the mathematical theory of
compressible Navier-Stokes equations are due to P.-L. Lions. In
monograph \cite{lions1} he proved the weak continuity of the viscous
flux, which is the most important result in the mathematical theory
of viscous compressible flows, and established the existence of a
weak renormalized solution to problem \eqref{inna1} for all
$\gamma>5/3$. More recently, Novotn\'{y} \&
 Stra\v{s}craba, \cite{novotnystrascr}, employed the
 concept of oscillation defect measure developed in
  \cite{FEIRBOOK} to prove the existence result for all
 $\gamma>3/2$.
Plotnikov \& Sokolowski, \cite{PS2}, proved the existence of
renormalized solutions to the Dirichlet boundary value problem for
the compressible Navier-Stokes equations  for all $\gamma>4/3$ .
However, they replaced mass  conservation condition \eqref{inna1cb}
by a more restrictive integral condition. Finally, Frehse,
Steinhauer, and Weigant, \cite{FSW1}, proved the existence of weak
renormalized solutions to problem \eqref{inna1} for all
$\gamma>4/3$.

The better results were obtained for periodic structures and the
Neumann boundary value problem. Jiang~ \& Zhou, \cite{JZ1},
\cite{JZ2}, proved the existence of renormalized periodic solutions
to the compressible Navier-Stokes equations assuming $\gamma>1$. In
a  paper \cite{JN}, Jessl\'{e}~\& Novotn\'{y} proved that for every
$\gamma>1$, system \eqref{inna1a}-\eqref{inna1b} has a renormalized
solutions satisfying  the slip boundary conditions
$$
\mathbf u\cdot\mathbf n=0,\quad (\mathbb S(\mathbf u)\mathbf
n)\times\mathbf n=0\text{~~on~~}\partial\Omega.
$$
In the recent paper \cite{JNP} by Jessl\'{e},  Novotn\'{y}, and
Pokorn\'{y} these results were extended to the case of compressible
heat conducting fluid.

At the present time,   the existence of solutions to the classical
no-slip problem \eqref{inna1} was proved under  assumption
$\gamma>4/3$. Our goal is to relax this restriction  and to extend
the existence theory to the range  $\gamma>1$. We aim to prove the
following

\begin{thm}\label{inna9}
Let $\Omega$ be a bounded domain with $C^2$ boundary and let
$\gamma>1$. Then for every $\mathbf f\in L^\infty(\Omega)$ and $M>0$
problem \eqref{inna1} has a weak solution such that
\begin{equation}\label{inna8}
    \|\mathbf
    u\|_{W^{1,2}_0(\Omega)}+\|\varrho\|_{L^{q\gamma}(\Omega)}
    +\|\varrho|\mathbf u|^2\|_{L^s(\Omega)}\leq c,
\end{equation}
where the exponents  $q>1$ and $s>1$ depend only on $\gamma$, the
constant $c$ depends only on $\Omega$, $\mathbf f$, $\gamma$, $M$,
$\mu$, and $\nu$.

\end{thm}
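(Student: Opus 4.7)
The plan is to combine the by-now classical Lions--Feireisl existence scheme with a refined a priori density estimate that is sharp enough to remain valid for $\gamma$ arbitrarily close to $1$. I would set up a three-parameter regularization of problem \eqref{inna1}: a Galerkin approximation of the momentum equation on the span of the first $N$ eigenfunctions of the Stokes operator, artificial viscosity $-\varepsilon\Delta\varrho$ (together with a small zero-order term) in the continuity equation to make it uniformly elliptic, and an extra pressure term $\delta\varrho^\beta$ with $\beta\gg 1$. Existence at this level reduces to a Leray--Schauder fixed-point argument once the modified continuity equation is inverted by standard elliptic theory, and the total mass is restored by a rescaling.

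The energy estimate, obtained by testing with $\mathbf{u}$, gives the uniform bound
$$\|\mathbf{u}\|^2_{W^{1,2}_0(\Omega)} + \|\varrho\|^\gamma_{L^\gamma(\Omega)} + \delta\|\varrho\|^\beta_{L^\beta(\Omega)} \le C.$$
To eventually remove the artificial pressure one needs $\varrho$ bounded in some $L^p(\Omega)$ with $p>3/2$ so that the convective flux $\varrho\mathbf{u}\otimes\mathbf{u}$ is weakly precompact in $L^1$ (recall $\mathbf{u}\in L^6$). The classical Bogovski\u{\i} test function $\boldsymbol\varphi=\mathcal{B}(\varrho^\theta-\langle\varrho^\theta\rangle)$ plugged into the momentum equation improves density integrability to $\varrho\in L^{\gamma+\theta}$, but optimizing over admissible $\theta$ only yields $p>3/2$ when $\gamma>4/3$. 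The heart of the proof, then, is a refined density estimate covering the range $\gamma\in(1,4/3]$: I would expect it to proceed by splitting the pressure test into interior and near-boundary contributions, using a localized potential test function $\nabla(-\Delta)^{-1}(\chi\,\varrho^\theta)$ in the interior and a boundary-adapted field that vanishes on $\partial\Omega$ near the boundary, combined with weighted estimates in the distance to $\partial\Omega$ to absorb the boundary losses. The output should be a bound of the form $\|\varrho\|_{L^{q\gamma}(\Omega)}+\|\varrho|\mathbf{u}|^2\|_{L^s(\Omega)}\le C$ with $q,s>1$ depending only on $\gamma$, stable under the approximation limits.

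With these estimates in hand, the limit passages $N\to\infty$, $\varepsilon\to0$, $\delta\to0$ follow the now-standard scheme. The decisive step is the strong convergence of the density, obtained from: (i) the weak continuity of the effective viscous flux $p(\varrho)-(\tfrac{4\mu}{3}+\nu)\operatorname{div}\mathbf{u}$ along approximating sequences, via the Div--Curl commutator based on the Riesz kernel $\nabla(-\Delta)^{-1}$; (ii) renormalization of the continuity equation applied to truncations $T_k(\varrho)$, which requires $\varrho\in L^2_{\mathrm{loc}}$ for $\mathbf{u}\in W^{1,2}$ and is supplied by the refined estimate above; and (iii) control of the Feireisl oscillation defect measure $\operatorname{osc}_p[\varrho_\delta\to\varrho]$, which propagates the smallness of density oscillations through the limit and, combined with (ii), yields $\varrho_\delta\to\varrho$ a.e.

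The principal obstacle is unquestionably the a priori density estimate in the regime $\gamma\in(1,4/3]$: the no-slip condition forces every admissible Bogovski\u{\i}-type test function to vanish on $\partial\Omega$, which destroys the naive pressure gain at the boundary when $\gamma$ is small. Producing a bound $\varrho\in L^{q\gamma}$ with $q\gamma>3/2$ that is uniform as $\delta\to 0$ is the genuinely new technical content; a secondary difficulty is ensuring that the oscillation defect measure for $\varrho^\gamma$ can be controlled under the correspondingly weak integrability of $\varrho$, since the customary control in $L^{2\gamma}$ is not available here and has to be replaced by a more delicate argument based on the refined estimate.
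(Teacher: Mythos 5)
Your overall framework (regularize, obtain a priori estimates, pass to the limit via the effective viscous flux and oscillation defect measure) matches the paper's framework, but the core technical content — the mechanism by which the density and kinetic-energy estimates are actually closed for $\gamma$ arbitrarily close to $1$ — is not present in your proposal, and the mechanism you sketch would not succeed.

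Two concrete points. First, you assert that one needs $\varrho\in L^p$ with $p>3/2$ to pass to the limit in $\varrho\mathbf u\otimes\mathbf u$, but this is precisely the obstruction that rules out small $\gamma$ in that strategy: for $\gamma$ near $1$ the exponent $q\gamma$ the paper obtains is also near $1$, well below $3/2$. What replaces this requirement is the separate kinetic-energy bound $\|\varrho|\mathbf u|^2\|_{L^s(\Omega)}\le c$ for some $s>1$ depending only on $\gamma$ (the quantity in \eqref{inna10}), which gives $L^1$-equi-integrability of the convective flux directly; the paper invokes the Jessl\'e--Novotn\'y / Plotnikov--Sokolowski compactness machinery which is built around exactly this bound rather than around $\varrho\in L^{3/2+}$. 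Your own remark that "the customary control in $L^{2\gamma}$ is not available here" is correct, but you offer no substitute, whereas the paper's Proposition~\ref{olga31} supplies it.

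Second, the a priori estimates themselves. You propose "a localized potential test function $\nabla(-\Delta)^{-1}(\chi\varrho^\theta)$ in the interior and a boundary-adapted field ... combined with weighted estimates in the distance to $\partial\Omega$," but this is essentially the Bogovski\u{\i} improvement which, as you note, saturates at $\gamma>4/3$. What makes the paper work is qualitatively different: it introduces the two quantities $A=\int_\Omega\varrho|\mathbf u|^{2(2-\theta)}\varphi^{2\beta}\,dx$ and $B=\int_\Omega\varrho^\gamma\varphi^{-\beta}\,dx$ with the specific exponents \eqref{anna100}, derives from the momentum equation (with test fields $\varphi^{1-\beta}\nabla\varphi$, $\varphi\boldsymbol\omega$, and the boundary-anisotropic field \eqref{olga1} built from $\Delta_\pm(x,x_0)=|\varphi(x)\pm\varphi(x_0)|+|x-x_0|^2$) weighted Morrey-type bounds of the form $\sup_{x_0}\int_\Omega g(x)|x-x_0|^{-1}dx\le c(1+A^{(1+\theta)/2})$, and then closes the loop via the elementary Green-potential embedding Lemma~\ref{sura7e}, which converts such a Morrey bound into $\int_\Omega|u|^2 g\,dx\le cE\|u\|^2_{W^{1,2}_0}$. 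That embedding lemma is the device that lets the kinetic energy $A$ be estimated by itself to a power strictly less than $1$, which is what closes the a priori estimate for all $\gamma>1$. Neither this lemma, nor the quantities $A,B$, nor the anisotropic boundary field appears in your sketch, so the central step — "producing a bound ... uniform as $\delta\to0$" that you correctly identify as the genuinely new content — remains unproved in your proposal.

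A minor structural difference: the paper does not re-derive the Galerkin/artificial-viscosity existence theory; it regularizes only the pressure (to $\varrho^\gamma+\epsilon\varrho^4$) and cites Lions for existence at the $\epsilon$-level, devoting the entire paper to the uniform estimate \eqref{inna10} (Theorem~\ref{inna11}). Your three-parameter scheme is not wrong, but it is orthogonal to where the difficulty lies.
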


 Notice that the question of existence of solutions to
problem \eqref{inna1} is closely related to the questions of
boundedness and compactness of the set of solutions to compressible
Navier-Stokes equations corresponding to various pressure functions.
Indeed, we can approximate the pressure function $p(\varrho)$
 with a fast growing artificial pressure $p_\epsilon$.
 Then we  obtain a
sequence of solutions
 $(\mathbf u_\epsilon, \varrho_\epsilon)$ to the regularized problem
  with $p$ replaced by $p_\epsilon$. After this we have to prove that
  the solutions $(\mathbf u_\epsilon, \varrho_\epsilon)$
 have uniformly bounded energies.
 Hence, passing to a subsequence, if necessary, we can assume that
 $(\mathbf u_\epsilon, \varrho_\epsilon)$ converge weakly in
 the energy space to some limit functions $(\mathbf u, \varrho)$.
Finally, we have to prove that the limit  satisfies equations
\eqref{inna1}.
 To give a rigorous
  meaning to the above discussion, we take the approximation of the pressure function in the form
\begin{equation}\label{inna4}
    p_\epsilon(\varrho)=\varrho^\gamma+\epsilon \varrho^4,
\end{equation}
and consider the regularized problem
\begin{subequations}\label{inna5}
\begin{equation}\label{inna5a}
 \text{div~}(\varrho\, \mathbf u\otimes\mathbf u)
+\nabla p_\epsilon(\rho)= \text{div~} \mathbb S(\mathbf u)+\varrho\,
\mathbf f\quad\text{in~}\Omega,\end{equation}
\begin{equation}
\label{inna5b} \text{div~}(\varrho \mathbf
u)=0\quad\text{~in~}\Omega,
\end{equation}
\begin{equation}\label{inna5c}
    \mathbf u=0\quad\text{on~}\partial \Omega, \quad
     \int_\Omega \varrho\, dx=M.
\end{equation}
\end{subequations}
Without loss of generality we can assume that
\begin{equation}\label{inna3}
    1<\gamma\leq 2.
\end{equation}
It is known, see \cite{lions1}, that for every $\epsilon\in (0,1]$,
problem \eqref{inna5} has at least one weak solution $(\mathbf
u_\epsilon, \varrho_\epsilon)\in W^{1,2}_0(\Omega)\times
L^8(\Omega)$ satisfying integral identities
\eqref{inna6}-\eqref{inna7} with $p$ replaced by $p_\epsilon$.

The proof of Theorem \ref{inna9} contains two contributions. The
first states a priori estimates for solutions of the regularized
equations. The second establishes compactness properties for these
solutions. The question of compactness of solutions to regularized
equations was investigated thoroughly in monographs \cite{lions1,
novotnystrascr} and papers \cite{JN,PS2} . It is known that if weak
solutions $(\mathbf u_\epsilon, \varrho_\epsilon)$ to problem
\eqref{inna5} satisfy the inequality
\begin{equation}\label{inna10}
    \|\mathbf
    u_\epsilon\|_{W^{1,2}_0(\Omega)}+\|p_\epsilon(\varrho_\epsilon)\|_{
    L^{q}(\Omega)}+\|\varrho_\epsilon|\mathbf u_\epsilon|^2\|_{L^s(\Omega)}\leq c,
\end{equation}
 then  after passing to a subsequence we can assume that these
solutions converge weakly in $W^{1,2}_0(\Omega)\times L^{\gamma
q}(\Omega)$ to a weak solution to problem \eqref{inna1}. Hence
Theorem \ref{inna9} will be proved if we prove  the following

\begin{thm}\label{inna11}
Let $\Omega$ be a bounded domain with $C^2$ boundary, $\mathbf f\in
L^\infty(\Omega)$, $\gamma>1$, and   $M>0$. Furthermore assume that
$(\mathbf u_\epsilon, \varrho_\epsilon)\in W^{1,2}_0\times
L^8(\Omega)$, $\epsilon\in (0,1]$, are weak solutions to problem
\eqref{inna5}. Then there are exponents $q>1$, $s>1$, depending only
on $\gamma$, and  a constant $c$, depending only on $\Omega$,
$\mathbf f$,$M$, $\gamma$,  and $\mu, \nu$, such that these
solutions satisfy  inequality \eqref{inna10}.
\end{thm}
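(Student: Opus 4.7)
The plan is to follow the classical multiplier strategy for the stationary compressible Navier--Stokes system: an energy identity, a Bogovskii test function for pressure integrability, and the closure of the resulting coupled nonlinear system. The substantive novelty has to lie in the control of $\varrho_\epsilon|\mathbf u_\epsilon|^2$, because for $\gamma$ close to $1$ this quantity cannot be estimated by the standard H\"older interpolation underlying all previous existence results in the range $\gamma>4/3$.

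First I would multiply \eqref{inna5a} by $\mathbf u_\epsilon$, integrate, and use \eqref{inna5b} together with the no-slip condition to annihilate the pressure and convective contributions in the usual way. Korn's inequality together with $\mathbf f\in L^\infty$ then gives $\|\nabla\mathbf u_\epsilon\|_{L^2}^2\le C\|\varrho_\epsilon\|_{L^{6/5}}\|\nabla\mathbf u_\epsilon\|_{L^2}$, so that a velocity estimate follows from any $L^q$-bound of the density with $q>1$. Second, for a small parameter $\alpha=\alpha(\gamma)>0$ to be chosen, I would test \eqref{inna5a} with $\boldsymbol\xi=\mathcal B[\varrho_\epsilon^\alpha-\overline{\varrho_\epsilon^\alpha}]$, where $\mathcal B$ is the Bogovskii right inverse of $\operatorname{div}$ on $\Omega$, and use the continuity equation to move the derivative of $\varrho_\epsilon^\alpha$ inside $\mathcal B$. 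Invoking the $L^r$-continuity of $\nabla\mathcal B$ then yields an inequality of the shape
\begin{equation*}
\int_\Omega p_\epsilon(\varrho_\epsilon)\,\varrho_\epsilon^\alpha\,dx\le C\Bigl(1+\|\nabla\mathbf u_\epsilon\|_{L^2}^2+\|\varrho_\epsilon|\mathbf u_\epsilon|^2\|_{L^s}\,\|\varrho_\epsilon^\alpha\|_{L^{s'}}\Bigr),
\end{equation*}
which lifts the pressure integrability from $L^1$ to $L^{1+\alpha/\gamma}$ provided the mixed term on the right is controlled.

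The hard part is the control of $\|\varrho_\epsilon|\mathbf u_\epsilon|^2\|_{L^s}$ with $s>1$, for which plain H\"older interpolation against $\|\mathbf u_\epsilon\|_{L^6}$ fails when $\gamma\in(1,4/3]$. My plan is to split $\Omega$ into an interior region and a boundary strip $\Omega_\delta=\{d(x)<\delta\}$, with $d(x)=\operatorname{dist}(x,\partial\Omega)$, and treat the two regions separately. In the interior the pressure bound from the Bogovskii step combined with Sobolev embedding suffices. On $\Omega_\delta$ I would exploit the no-slip condition through a Hardy-type inequality, which delivers a gain of $d(x)^2$ when pairing $|\mathbf u_\epsilon|^2$ against the density, and combine it with a weighted density estimate in the boundary strip obtained from a localized test function. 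The weight exponent is tuned so that the $d^2$ gain compensates the loss of integrability from the singular weight. Assembling these ingredients yields a closed nonlinear system of algebraic inequalities for the three norms in \eqref{inna10}; Young's inequality and the fixed mass constraint $\int_\Omega\varrho_\epsilon\,dx=M$ then produce uniform bounds for explicit exponents $q,s>1$ depending on $\gamma$.
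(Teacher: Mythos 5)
Your outline correctly identifies the bottleneck (the term $\varrho_\epsilon|\mathbf u_\epsilon|^2$ in the Bogovskii step) and the general shape of the closure (a coupled system of algebraic inequalities), but the mechanism you propose for closing it does not work, and it misses the one genuinely new tool the paper relies on. The paper's key ingredient is Lemma~\ref{sura7e}: if $f\ge 0$ satisfies $\sup_{x_0\in\Omega}\int_\Omega f(x)|x-x_0|^{-1}\,dx\le E$, then $\int_\Omega|u|^2f\,dx\le cE\|u\|^2_{W^{1,2}_0(\Omega)}$. This is a Maz'ya--Adams-type embedding $W^{1,2}_0(\Omega)\hookrightarrow L^2(\Omega,f\,dx)$, proved here via the Green function of the Laplacian, and it is applied with $f=\varrho|\mathbf u|^{2(1-\theta)}\varphi^{2\beta}$ (Lemma~\ref{kira1}) so that $A=\int_\Omega\varrho|\mathbf u|^{2(2-\theta)}\varphi^{2\beta}\,dx\le c_e(1+A^{(1+\theta)/2})\|\mathbf u\|^2_{W^{1,2}_0}$; combined with $\|\mathbf u\|^2_{W^{1,2}_0}\le c_e(1+A^{(1-2\theta)/2})$ this closes because the total exponent $1-\theta/2<1$. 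Your replacement --- Hardy's inequality $\int_\Omega|\mathbf u|^2 d^{-2}\,dx\le C\|\nabla\mathbf u\|^2_{L^2}$ --- only gives an $L^1$ bound on $|\mathbf u|^2 d^{-2}$, and pairing it with $\varrho$ via H\"older would require a pointwise or high-integrability bound on $\varrho\,d^{2}$ that you do not have. There is no version of Hardy's inequality that delivers what Lemma~\ref{sura7e} delivers, namely the ability to absorb a factor $|\mathbf u|^2$ against a nonnegative density with controlled Newtonian potential.

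There is a second gap. You assert that "in the interior the pressure bound from the Bogovskii step combined with Sobolev embedding suffices." It does not, for $\gamma$ near $1$. Testing with $\mathcal B[\varrho^\alpha-\overline{\varrho^\alpha}]$ and tracking exponents through the convective term forces $\alpha\le\gamma(s-1)$ while $\varrho|\mathbf u|^2\in L^s$ by H\"older demands $\varrho\in L^{3s/(3-s)}$, and these two constraints are only compatible with $s>1$ when $\gamma>3/2$; the refinement to $\gamma>4/3$ already needs the boundary-weighted estimates of the earlier literature. The paper handles the interior by the same potential-theoretic route (Lemma~\ref{olga23} produces $\sup_{x_0}\int_\Omega\eta\,p_\epsilon|x-x_0|^{-1}\,dx\le c_e$ and Lemma~\ref{sura7e} converts it into the $L^s$ bound on $\varrho|\mathbf u|^2$), and it controls the boundary strip with carefully weighted test fields $\varphi^{1-\beta}\nabla\varphi$, $\varphi^{3/2-\beta}|x-x_0|^{-\alpha}(x-x_0)$, and a $\Delta_\pm$-based field whose divergence and symmetric gradient are simultaneously signed. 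Without Lemma~\ref{sura7e}, and without the distance-weighted estimates on $p_\epsilon\varphi^{-\beta}$, $p_\epsilon\varphi^{1-\beta}$, and $\varrho(\mathbf u\cdot\nabla\varphi)^2\varphi^{-\beta}$ from Lemmas~\ref{lena1} and~\ref{lena4}, your "closed nonlinear system of algebraic inequalities" does not in fact close for $\gamma$ close to $1$.
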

The rest of the paper is devoted to the proof of  Theorem
\ref{inna11}.

\section{Notation and definitions. Auxiliary Propositions}\label{anna}
\renewcommand{\theequation}{2.\arabic{equation}}
\setcounter{equation}{0}

{\bf Notation.} For every $\gamma>1$ we denote by
 $\theta$, $\beta$, $s$,  and $q$  the quantities
\begin{equation}\label{anna100}\begin{split}
\theta=\frac{1}{8}(1-\gamma^{-1}), \quad
    \beta=\frac{3\big(1-8\theta^2\big)}
    {2\big(3-8\theta^2\big)},\\
s=1+2\theta^2, \quad q=1+\frac{\beta(s-1)}{\beta+(1-\beta)s}.
\end{split}
\end{equation}
It is easily seen that
\begin{equation}\label{anna101}
    0<\theta<1/8, \quad 0<\beta<1/2, \quad 1<s<33/32.
\end{equation}
Further the signed distance function $d(x)$  is given by
\begin{equation}\label{anna102}
d(x)=\text{~dist~}(x,\partial\Omega)\text{~~for~~} x\in
\bar{\Omega}, \quad
d(x)=-\text{~dist~}(x,\partial\Omega)\text{~~for~~} x\in\mathbb
R^3\setminus\Omega.
\end{equation}
For every $c>0$ denote by $\mathcal A_c$ and $\Omega_c$ the
annuluses
\begin{equation}\label{anna103}
A_c=\{x\in \mathbb R^3:\,\, \text{dist~}(x,\partial\Omega)<c\},
\quad \Omega_c=A_c\cap \Omega.
\end{equation}
 Since $\partial \Omega \in
C^2(\Omega)$, there is $t>0$, depending only on $\Omega$, such that
\begin{equation}\label{anna104}
d\in C^2(\bar {\mathcal A_{2t}}), \quad |\nabla d(x)|=1\text{~~in
~~}\bar A_{2t}.
\end{equation}
See  \cite[chap. 14.6]{GT} for the proof.
\begin{definition}\label{anna105} Further, the notation $\varphi$ stands for the function
 $\varphi: \overline{\mathcal
A_{2t}\cup\Omega}\to \mathbb R$ with the properties:
\begin{itemize}
\item[1.]$\varphi\in C^2(\overline{\mathcal
A_{2t}\cup\Omega})$, ~~ $\varphi(x)=d(x)$ in $\mathcal A_{2t}$,
\item[2.] there is $k>0$ such that $\varphi>k$ in
$\Omega\setminus\Omega_{2t}$.
\end{itemize}
Notice that $\varphi$ is  positive in $\Omega$. The existence of
such a function obviously follows from the Whitney extension
theorem.
\end{definition}
\begin{rmk}\label{anna200} Recall that our goal is to obtain the a
priori estimates of solutions to regularized equations
\eqref{inna5}. These estimates depend on the flow domain, the
constitutive law, and the parameters in equations. Further, we
denote by $c_e$ general constants depending only on $\Omega$, $\mu$,
$\nu$, $\gamma$, $M$,  and  $\|\mathbf f\|_{L^\infty(\Omega)}$.
\end{rmk}

{\bf Auxiliary Lemmas.} In this section we prove two technical
lemmas. The first constitutes the properties of solutions to the
regularized problem.
\begin{lem}\label{anna5}
Let $\epsilon\in (0,1]$. Then problem \eqref{inna5} has a weak
renormalized solution $(\mathbf u, \varrho)\in
W^{1,2}_0(\Omega)\times L^8(\Omega)$ with the following properties.
The integral identities
\begin{equation}\label{anna7}
     \int_\Omega\big(\varrho\mathbf u\otimes\mathbf u:\nabla\boldsymbol
    \xi+p_\epsilon(\varrho) \text{div~}\boldsymbol\xi-\mathbb
    S(\mathbf u):\nabla\boldsymbol\xi+\varrho\mathbf f\cdot \boldsymbol\xi\big)\,
    dx=0,
\end{equation}
\begin{equation}\label{anna9}
 \int_\Omega \big(\psi(\varrho)\mathbf u\cdot\nabla\zeta+
\zeta (\varrho\psi'(\varrho)-\psi(\varrho))\text{\rm div~}\mathbf
u\big)\, dx=0
\end{equation}
hold for all $\boldsymbol\xi\in W^{1,2}_0(\Omega)$, $\zeta\in
C^\infty(\Omega)$, and for all functions $\psi\in C^1[0, \infty)$
satisfying the condition
$$
|\psi(s)|+|s\psi'(s)|\leq c (1+|s|^4), \quad s\in [0,\infty).
$$
Moreover, we have
\begin{equation}\label{anna11}
     \int_\Omega\mathbb
    S(\mathbf u):\nabla\mathbf u\, dx=\int_\Omega\varrho\mathbf f\cdot \mathbf
    u\,
    dx.
\end{equation}
\end{lem}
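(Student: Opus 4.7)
The plan is to invoke Lions' existence theory for the artificial pressure system, enlarge the test function class in the momentum identity, and then obtain the energy equality by testing with $\mathbf u$ itself. For the starting point I would cite~\cite{lions1}: for the pressure $p_\epsilon(\varrho) = \varrho^\gamma + \epsilon\varrho^4$ with exponent $4>3/2$, the standard construction (Galerkin approximation, vanishing viscosity, effective viscous flux identity) delivers a weak renormalized solution $(\mathbf u,\varrho)\in W^{1,2}_0(\Omega)\times L^8(\Omega)$ for which \eqref{anna7} holds for $\boldsymbol\xi\in C^\infty_0(\Omega)$ and \eqref{anna9} holds for all $\zeta\in C^\infty(\Omega)$ and all $\psi\in C^1[0,\infty)$ satisfying the stated growth bound. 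The $L^8$ integrability of $\varrho$ is the higher-integrability gain produced by the $\varrho^4$ term and the requirement that $p_\epsilon(\varrho)\in L^2$.

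The first step after that will be to extend \eqref{anna7} from $C^\infty_0(\Omega)$ to $W^{1,2}_0(\Omega)$ by density. One checks that each term is continuous in the $W^{1,2}_0$-norm of $\boldsymbol\xi$: by the Sobolev embedding $\mathbf u\in L^6(\Omega)$ and H\"older's inequality, $\varrho\mathbf u\otimes\mathbf u\in L^{24/11}(\Omega)\hookrightarrow L^2(\Omega)$; the hypothesis $\gamma\le 2$ from \eqref{inna3} combined with $\varrho\in L^8$ gives $p_\epsilon(\varrho)\in L^2(\Omega)$; and $\varrho\mathbf f\in L^8(\Omega)$. The stress term is evidently continuous.

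With \eqref{anna7} now admissible for $\boldsymbol\xi=\mathbf u$, I would derive \eqref{anna11} by showing that the convective and pressure contributions vanish. For the pressure, I plug $\zeta\equiv 1$ and
\[
\psi(s)=\frac{s^\gamma}{\gamma-1}+\frac{\epsilon s^4}{3}
\]
into \eqref{anna9}; this $\psi$ meets the growth bound, and a direct computation gives $s\psi'(s)-\psi(s)=p_\epsilon(s)$, so that $\int_\Omega p_\epsilon(\varrho)\,\mathrm{div}\,\mathbf u\,dx=0$. For the convective term I would rewrite
\[
\varrho\mathbf u\otimes\mathbf u:\nabla\mathbf u = \varrho\mathbf u\cdot\nabla\bigl(|\mathbf u|^2/2\bigr)
\]
and test the weak mass balance (the case $\psi(s)=s$ of \eqref{anna9}, for which $s\psi'-\psi=0$) against $\zeta=|\mathbf u|^2/2\in W^{1,3/2}_0(\Omega)$. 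Since $\varrho\mathbf u\in L^{24/7}(\Omega)$, the resulting pairing is well defined once \eqref{anna9} has been extended from smooth $\zeta$ to $\zeta\in W^{1,3/2}_0(\Omega)$ by density.

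The main technical obstacle I anticipate is precisely this last density step: certifying that $|\mathbf u|^2/2$ is a legitimate test function in the renormalized continuity equation. I would handle it by mollifying $|\mathbf u|^2/2$ into smooth, compactly supported functions, passing to the limit in $\int_\Omega \varrho\mathbf u\cdot\nabla\zeta_n\,dx$ using the uniform $L^8$ bound on $\varrho$ together with the Sobolev embedding, and then identifying the limit with $0$. An equivalent route is to apply \eqref{anna9} to the truncated renormalization $\psi_k(s)=s\,T_k(s)$ with a smooth cutoff $T_k$, with a localizing $\zeta$, and pass to the limit in both $k$ and the localizer in the DiPerna--Lions style. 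Either way the remaining arguments are routine and yield \eqref{anna11}.
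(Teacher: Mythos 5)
Your proposal is correct and follows essentially the same route as the paper: cite Lions for the existence of a renormalized solution satisfying \eqref{anna7}--\eqref{anna9}, choose $\psi(s)=\tfrac{s^\gamma}{\gamma-1}+\tfrac{\epsilon s^4}{3}$ with $\zeta\equiv 1$ to kill the pressure term, and approximate $|\mathbf u|^2$ in $W^{1,3/2}$ to kill the convective term via the weak mass balance. The only difference is that you spell out the density step extending \eqref{anna7} to $\boldsymbol\xi\in W^{1,2}_0(\Omega)$, which the paper leaves implicit.
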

\begin{proof}
The existence of solutions satisfying \eqref{anna7} and
\eqref{anna9} was proved in \cite{lions1}. It remains to prove
\eqref{anna11}. Set
$$
\psi(\varrho)=\frac{1}{\gamma-1}\varrho^\gamma+\frac{\epsilon}{3}\varrho^4.
$$
Obviously we have
$\varrho\psi'(\varrho)-\psi(\varrho)=p_\epsilon(\varrho) $.
Substituting $\psi(\varrho)$ and $\zeta=1$ into \eqref{anna9} we
arrive at the identity
$$
\int_\Omega p_\epsilon(\varrho)\text{div~}\mathbf u\, dx=0.
$$
It follows from this and identity \eqref{anna7} with
$\boldsymbol\xi=\mathbf u$ that
\begin{equation}\label{anna12}
     \int_\Omega\big(\varrho\mathbf u\otimes\mathbf u:\nabla\mathbf u-\mathbb
    S(\mathbf u):\nabla\mathbf u+\varrho\mathbf f\cdot \mathbf u\big)\,
    dx=0.
\end{equation}
Next we have
$$
\int_\Omega\varrho\mathbf u\otimes\mathbf u:\nabla\mathbf u\,
dx=\frac{1}{2}\int_\Omega \varrho\mathbf u\nabla(|\mathbf u|^2)\,
dx.
$$
Since the embedding $W^{1,2}_0(\Omega)\hookrightarrow L^6(\Omega)$
is bounded,  we have $|\mathbf u|^2\in W^{1, 3/2}(\Omega)$ and
$\varrho|\mathbf u|^2\in L^2(\Omega)$. Hence there is a sequence
$\zeta_n\in C^1(\Omega)$ such that
\begin{equation}\label{anna13}
    \nabla\zeta_n \to \nabla(|\mathbf u|^2)\text{~~in~~}
    L^{3/2}(\Omega).
\end{equation}
On the other hand, the Cauchy inequality  implies
$$
(\varrho|\mathbf u|)^3\leq  \varrho^6+|\mathbf u|^6\in L^1(\Omega).
$$
From this  we conclude that $\varrho\mathbf u\in L^3(\Omega)$.
Notice that $\varrho\mathbf u$ satisfies the integral identity
$$
\int_\Omega \varrho\mathbf u\cdot \nabla\zeta_n\, dx=0.
$$
Letting $n\to \infty$ and using relation \eqref{anna13} we arrive at
\begin{equation}\label{anna13a}
\int_\Omega\varrho\mathbf u\otimes\mathbf u:\nabla\mathbf u\,
dx=\frac{1}{2}\int_\Omega \varrho\mathbf u\nabla(|\mathbf u|^2)\,
dx=0.
\end{equation}
Inserting \eqref{anna13a} into \eqref{anna12} we obtain the desired
identity \eqref{anna11}.
\end{proof}
The second lemma is of general character. It is known, see
\cite{PS1, PS2}, that the boundedness of the Green potential of a
Borel measure $\sigma$ implies the continuity  of the embedding
 $W^{1,2}_0(\Omega)\hookrightarrow L^2(\Omega, d\sigma)$. This
 fact is a straightforward consequence of the
Mazja-Adams embedding theorem, see \cite{AH}. We give an elementary
proof of this result in a particular case.
\begin{lem}\label{sura7e}
Let $\Omega\in \mathbb R^3$ be a bounded domain with $C^2$ boundary.
Let $f\in L^2(\Omega)$ satisfy
$$
f\geq 0, \quad \int_\Omega f(x)|x-x_0|^{-1}\, dx \leq E \text{~~for
all~~}x_0\in \Omega.
$$
 Then there is $c>0$, depending only on $\Omega$, such that
\begin{equation}\label{sura7ee}
\int_\Omega | u|^2 f\, dx \leq c E\,\|
u\|_{W^{1,2}_0(\Omega)}^2\text{~~for all~~}u\in W^{1,2}_0(\Omega).
\end{equation}
\end{lem}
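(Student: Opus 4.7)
The plan is to exploit the fact that in three dimensions the kernel $|x-y|^{-1}$ is the Newton potential kernel, so the hypothesis on $f$ says that the Newton potential of $f$ is uniformly bounded on $\Omega$. By density it suffices to prove the inequality for $u\in C^\infty_0(\Omega)$, which we extend by zero to $\mathbb R^3$. The standard integration-by-parts representation $u=(-\Delta)^{-1}(-\Delta u)$ together with one integration by parts yields the pointwise bound
\begin{equation*}
|u(x)|\;\leq\;\frac{1}{4\pi}\int_{\mathbb R^3}\frac{|\nabla u(y)|}{|x-y|^{2}}\,dy,
\end{equation*}
so we are reduced to controlling $\|Tv\|_{L^2(f\,dx)}$ with $Tv(x)=\int|x-y|^{-2}v(y)\,dy$ and $v=|\nabla u|$.

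I would then pass to the dual formulation: for $g\in L^2(f\,dx)$,
\begin{equation*}
\int_\Omega (Tv)\,g\,f\,dx=\int v(y)\Big(\int\frac{g(x)f(x)}{|x-y|^{2}}\,dx\Big)dy,
\end{equation*}
and apply Cauchy--Schwarz in the $y$-variable. The remaining task is therefore to bound $\int_{\mathbb R^3}|\int g(x)f(x)|x-y|^{-2}\,dx|^{2}\,dy$ by $cE\|g\|_{L^2(f)}^{2}$.

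The crucial ingredient now is the Riesz composition identity in $\mathbb R^3$,
\begin{equation*}
\int_{\mathbb R^3}\frac{dy}{|x-y|^{2}\,|z-y|^{2}}\;=\;\frac{c}{|x-z|},
\end{equation*}
which is just the statement that $|\cdot|^{-2}\ast|\cdot|^{-2}$ is a constant multiple of $|\cdot|^{-1}$ (equivalently $I_1\ast I_1=cI_2$). Squaring and applying Fubini, the above double integral becomes
\begin{equation*}
c\int_\Omega\!\int_\Omega\frac{g(x)g(z)f(x)f(z)}{|x-z|}\,dx\,dz,
\end{equation*}
and the elementary inequality $|g(x)g(z)|\leq\tfrac12(g(x)^{2}+g(z)^{2})$ together with symmetry reduces it to $c\int_\Omega g(x)^{2}f(x)\big(\int_\Omega f(z)|x-z|^{-1}\,dz\big)dx$, which is bounded by $cE\|g\|_{L^2(f)}^{2}$ thanks to the hypothesis on $f$. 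Combining the estimates and taking the supremum over $g$ gives $\int|u|^2 f\,dx\leq cE\|\nabla u\|_{L^2}^{2}$.

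The only nontrivial step is the Riesz composition identity; everything else is Cauchy--Schwarz and Fubini. One small point to verify is that extending $u$ by zero to $\mathbb R^3$ and using the whole-space Newton potential representation is legitimate for $u\in C^\infty_0(\Omega)$, which it is; the final density argument to general $u\in W^{1,2}_0(\Omega)$ is standard because both sides of \eqref{sura7ee} are continuous with respect to $W^{1,2}_0$-convergence (the right-hand side trivially, and the left-hand side via Fatou after extracting an a.e.\ convergent subsequence, since $f\,dx$ is a finite measure).
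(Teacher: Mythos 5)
Your proof is correct, but it takes a genuinely different route from the paper's. The paper solves the auxiliary Dirichlet problem $-\Delta h=f$ in $\Omega$, $h=0$ on $\partial\Omega$, uses the Green's-function bound $G(x,x_0)\leq c|x-x_0|^{-1}$ to conclude $\|h\|_{L^\infty(\Omega)}\leq cE$, and then estimates $K=\int|u|^2(-\Delta h)\,dx$ by two integrations by parts and a Cauchy--Schwarz absorption in which the term $\int|u|^2|\nabla h|^2\,dx$ is bootstrapped against $K$ itself. You instead extend $u$ by zero, use the whole-space Newtonian potential bound $|u|\leq cI_1(|\nabla u|)$, dualize the operator with kernel $|x-y|^{-2}$ against $L^2(f\,dx)$, and make the Riesz semigroup identity $I_1\ast I_1=cI_2$ (i.e.\ $|\cdot|^{-2}\ast|\cdot|^{-2}=c|\cdot|^{-1}$ in $\mathbb R^3$) do the work, after which the symmetrization $|g(x)g(z)|\leq\tfrac12(g(x)^2+g(z)^2)$ and Fubini--Tonelli reduce everything to the hypothesis on $f$. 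This is essentially the $T^\ast T$ proof of the capacitary strong-type (Maz'ya--Adams) estimate, which the paper's authors explicitly mention as the general result they are re-proving ``elementarily.'' What the paper's route buys is that it never leaves the domain $\Omega$, needing only the pointwise Green's-function bound; what your route buys is transparency and generality (it works verbatim in $\mathbb R^n$ with $I_\alpha$ and with $W^{1,p}_0$ after obvious modifications) at the cost of invoking the convolution identity for Riesz kernels. One small point worth spelling out in a writeup: the expansion of $\bigl\|\int g f |x-\cdot|^{-2}\,dx\bigr\|_{L^2}^2$ into the double integral and the application of Fubini is justified because the kernel and $f$ are nonnegative, so Tonelli applies to $|g|$ first, giving a priori finiteness; also the $L^6$ embedding makes $\int|u|^2 f\,dx$ finite and continuous in $u$ for the density step, as you note.
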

\begin{proof}
Let $h\in W^{2,2}(\Omega)$ be a solution to the boundary  value
problem
\begin{equation}\label{sura9}
    -\Delta h=f\text{~~in~~}\Omega, \quad h=0\text{~~on~~}\partial
    \Omega.
\end{equation}
This solution has the representation
$$
h(x_0)=\int_\Omega G(x,x_0)f(x)\,dx,
$$
where the Green function  $G(x,x_0)$   admits the estimate $
G(x,x_0)\leq c|x-x_0|^{-1}. $ We thus get
\begin{equation}\label{sura10}
    \|h\|_{L^\infty(\Omega)}\leq c E.
\end{equation}
Set $$ K= \int_\Omega | u|^2 f\, dx.$$ We have
$$
K=\int_\Omega | u|^2(-\Delta h)\,dx=2\int_\Omega( u\nabla
u)\cdot\nabla h\, dx.
$$
Applying the Cauchy inequality we obtain
\begin{equation}\label{sura9}
    K\leq \| u\|_{W^{1,2}_0(\Omega)}\Big(\int_\Omega|
    u|^2|\nabla h|^2\, dx\Big)^{1/2}.
\end{equation}
Let us estimate the integral in the right hand side. Integration by
parts gives
\begin{equation}\begin{split}\label{kuka1}
\int_\Omega|
    u|^2|\nabla h|^2\, dx=-2\int_\Omega h\big( u \nabla
    u)\cdot\nabla h\, dx+\int_\Omega h(-\Delta h)| u|^2\,
    dx\\\leq  \|h\|_{L^\infty(\Omega)}\Big(\int_\Omega | u| |\nabla
    u||\nabla h|\, dx+K\Big)\leq
   c E \Big(\int_\Omega | u| |\nabla
    u||\nabla h|\, dx+K\Big).
\end{split}\end{equation}
In turn,  the Cauchy inequality implies
$$
c E \int_\Omega | u| |\nabla
    u||\nabla h|\, dx\leq \frac{1}{2}\int_\Omega|
    u|^2|\nabla h|^2\, dx+c E^2\|
    u\|_{W^{1,2}_0(\Omega)}^2.
$$
Inserting this inequality into \eqref{kuka1} we arrive at
\begin{equation*}\begin{split}
\int_\Omega|
    u|^2|\nabla h|^2\, dx\leq
E^2\|
    u\|_{W^{1,2}_0(\Omega)}^2+ cEK.
\end{split}\end{equation*}
Inserting this inequality into \eqref{sura9} we obtain
\begin{equation*}
K\leq c\| u\|_{W^{1,2}_0(\Omega)}^2 E+ c \|
u\|_{W^{1,2}_0(\Omega)}(EK)^{1/2}.
\end{equation*}
Noting that
$$
 c
\| u\|_{W^{1,2}_0(\Omega)}(EK)^{1/2} \leq \frac{1}{2} K +c\|
u\|_{W^{1,2}_0(\Omega)}^2 E
$$
we finally obtain
$$
K\leq c\| u\|_{W^{1,2}_0(\Omega)}^2 E.
$$
\end{proof}
\section{Estimates near the boundary. Pressure estimates}
\renewcommand{\theequation}{3.\arabic{equation}}
\setcounter{equation}{0}The remarkable property of the compressible
Navier-Stokes equations is that the normal component of the energy
tensor $2^{-1}\varrho\mathbf u\otimes\mathbf u+p_\epsilon
(\varrho)\mathbf I$ is  small by comparison to its tangent component
in a neighborhood of $\partial\Omega$, see \cite{PS1,PS2}.  The
following  lemma is a refined version of  this result.
\begin{lem}\label{lena1} Let  a solution
  $(\mathbf u, \varrho)\in W^{1,2}_0(\Omega)\times
L^8(\Omega)$ to problem \eqref{inna5} be defined by Lemma
\ref{anna5}.
 Then
\begin{equation}\label{lena2}\begin{split}
    \|p_\epsilon(\varrho)\varphi^{-\beta}\|_{L^1(\Omega)}
    +\|\varrho (\mathbf
    u\cdot\nabla\varphi)^2\varphi^{-\beta}\|_{L^1(\Omega)} \leq\\
     c_e\big(1+\|\mathbf u\|_{W^{1,2}_0(\Omega)}+
     \|p_\epsilon(\varrho)\varphi^{1-\beta}\|_{L^1(\Omega)}+\|
     \varrho|\mathbf
     u|^2\varphi^{1-\beta}\|_{L^1(\Omega)}\big),
\end{split}\end{equation}
 where $\beta$ is given by \eqref{anna100} and $\varphi$ is given by Definition
 \ref{anna105}.\end{lem}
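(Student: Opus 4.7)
The gap between the two sides of \eqref{lena2} is exactly one power of $\varphi$, which is precisely the gain produced by differentiating $\varphi^{1-\beta}$. The natural test function is therefore
\[
\boldsymbol{\xi}=\frac{1}{1-\beta}\,\varphi^{1-\beta}\nabla\varphi,
\]
and I would start by checking that $\boldsymbol{\xi}\in W^{1,2}_0(\Omega)$ so that it may be used in \eqref{anna7}. It vanishes on $\partial\Omega$ because $\varphi|_{\partial\Omega}=d|_{\partial\Omega}=0$ and $1-\beta>0$; moreover
\[
\nabla\boldsymbol{\xi}=\varphi^{-\beta}\,\nabla\varphi\otimes\nabla\varphi+\tfrac{1}{1-\beta}\,\varphi^{1-\beta}\nabla^2\varphi,
\]
whose only possibly singular part is bounded by $c\varphi^{-\beta}$, and this lies in $L^2(\Omega)$ because $2\beta<1$ by \eqref{anna101}.

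Substituting $\boldsymbol{\xi}$ into the momentum identity \eqref{anna7} and collecting terms by the power of $\varphi$ produces
\[
\int_\Omega\varphi^{-\beta}\bigl(p_\epsilon(\varrho)|\nabla\varphi|^2+\varrho(\mathbf{u}\cdot\nabla\varphi)^2\bigr)\,dx=R,
\]
where
\[
R=\int_\Omega\mathbb{S}(\mathbf{u}):\nabla\boldsymbol{\xi}\,dx-\int_\Omega\varrho\mathbf{f}\cdot\boldsymbol{\xi}\,dx-\frac{1}{1-\beta}\int_\Omega\varphi^{1-\beta}\bigl(p_\epsilon\Delta\varphi+\varrho\,u_iu_j\partial^2_{ij}\varphi\bigr)dx.
\]
Each piece of $R$ matches one of the quantities on the right of \eqref{lena2}: the viscous term is bounded by Cauchy--Schwarz and the fact that $\|\varphi^{-\beta}\|_{L^2(\Omega)}\le c$ (again $2\beta<1$) by $c\|\mathbf{u}\|_{W^{1,2}_0}$; the force term satisfies $|\int\varrho\mathbf{f}\cdot\boldsymbol{\xi}|\le c\|\mathbf{f}\|_\infty\|\varphi\|_\infty^{1-\beta}M$ using $\int\varrho\,dx=M$; and since $\nabla^2\varphi\in L^\infty(\Omega)$ by Definition~\ref{anna105} and \eqref{anna104}, the last integral is controlled by $c\bigl(\|p_\epsilon\varphi^{1-\beta}\|_{L^1}+\|\varrho|\mathbf{u}|^2\varphi^{1-\beta}\|_{L^1}\bigr)$.

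It remains to pass from the weighted pressure integral $\int p_\epsilon|\nabla\varphi|^2\varphi^{-\beta}$ to the unweighted $\int p_\epsilon\varphi^{-\beta}$. I would split $\Omega=\Omega_{2t}\cup(\Omega\setminus\Omega_{2t})$: on $\Omega_{2t}$ the identity $|\nabla\varphi|=1$ from \eqref{anna104} is exactly what is needed, while on $\Omega\setminus\Omega_{2t}$ Definition~\ref{anna105} gives $\varphi\ge k>0$ so that
\[
\int_{\Omega\setminus\Omega_{2t}}p_\epsilon\varphi^{-\beta}\,dx\le k^{-1}\int_\Omega p_\epsilon\varphi^{1-\beta}\,dx,
\]
which is already accounted for on the right of \eqref{lena2}. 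The delicate point I expect to be the main obstacle is the calibration of $\beta$: it must be small enough ($\beta<1/2$) for $\boldsymbol{\xi}$ to sit in $W^{1,2}_0(\Omega)$, and simultaneously it must be nontrivial enough to let the pressure estimate developed from \eqref{lena2} dovetail with the later Plotnikov--Sokolowski estimates that the rest of the paper will use; this is exactly the role of the specific choice of $\beta$ in \eqref{anna100}.
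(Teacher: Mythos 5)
Your proof is correct and takes essentially the same approach as the paper: it uses the test function $\varphi^{1-\beta}\nabla\varphi$ in the momentum identity \eqref{anna7}, controls the viscous and force terms via $2\beta<1$, and absorbs the $\nabla^2\varphi$ contributions into the $\varphi^{1-\beta}$-weighted norms on the right of \eqref{lena2}. Your explicit split of $\Omega$ into $\Omega_{2t}$ and $\Omega\setminus\Omega_{2t}$ to handle the $|\nabla\varphi|^2$ factor is equivalent to (and slightly more transparent than) the pointwise inequality $p_\epsilon\operatorname{div}\boldsymbol\xi\ge \tfrac12\varphi^{-\beta}p_\epsilon-c\varphi^{1-\beta}p_\epsilon$ the paper asserts.
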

\begin{proof} Recall that $\beta\in (0,1/2)$. Introduce the vector field
\begin{equation}\label{lena3}
    \boldsymbol\xi(x)=\varphi^{1-\beta}(x)\, \nabla\varphi(x), \quad
    x\in \Omega.
\end{equation}
Obviously we have
\begin{equation*}
    \nabla\boldsymbol\xi=\varphi^{1-\beta}\nabla^2\varphi+(1-\beta)\varphi^{-\beta}
    \nabla\varphi\otimes \nabla\varphi.
\end{equation*}
It follows from \eqref{anna101} that
\begin{equation*}
    |\varphi^{1-\beta}\nabla^2\varphi|\leq c, \quad
    |\varphi^{-\beta}
    \nabla\varphi\otimes \nabla\varphi|\leq c\varphi^{-\beta},
\end{equation*}
and hence $\boldsymbol\xi \in W^{1,r}_0(\Omega)$ for all $r\in
[1,1/\beta)$. Substituting $\boldsymbol\xi$ in identity
\eqref{anna7} we obtain
\begin{equation}\begin{split}\label{lena4a}
     \int_\Omega\big(\varrho\mathbf u\otimes\mathbf u:\nabla\boldsymbol
    \xi+p_\epsilon(\varrho) \text{div~}\boldsymbol\xi\big)\, dx=\int_\Omega
    \big(\mathbb
    S(\mathbf u):\nabla\boldsymbol\xi\, dx-\varrho\mathbf f\cdot \boldsymbol\xi\big)\,
    dx\leq\\
    c\Big(\int_\Omega | S(\mathbf u)|^2\, dx\Big)^{1/2}+c\int_\Omega
    \varrho\, dx\leq c (1+\|\mathbf u\|_{W^{1,2}_0(\Omega)}).
\end{split}\end{equation}
On the other hand, we have
\begin{equation*}
p_\epsilon\text{div~}\boldsymbol\xi=(1-\beta)p_\epsilon\varphi^{-\beta}
|\nabla\varphi|^2+p_\epsilon \varphi^{1-\beta}\Delta\varphi\geq
2^{-1}\varphi^{-\beta}p_\epsilon-c \varphi^{1-\beta}p_\epsilon
\end{equation*}
and
\begin{equation*}\begin{split}
\varrho\mathbf u\otimes\mathbf u:\nabla\boldsymbol
    \xi=(1-\beta)\varphi^{-\beta}\varrho (\mathbf
    u\cdot\nabla\varphi)^2+\varphi^{1-\beta}\varrho
    u_iu_j\partial_i\partial_j\varphi\geq
    \\ (1-\beta)\varphi^{-\beta}\varrho (\mathbf
    u\cdot\nabla\varphi)^2-c\varphi^{1-\beta}\varrho|\mathbf u|^2.
\end{split}\end{equation*} Inserting  these inequalities into \eqref{lena4a}
we obtain \eqref{lena2}.
\end{proof}
 The proof that the pressure function is
integrable with some exponent greater than one is the essential part
of the mathematical analysis of compressible viscous flows. The
following lemma establishes this result for a weighted pressure
function.

\begin{lem}\label{lena4}
Let  a solution
  $(\mathbf u, \varrho)\in W^{1,2}_0(\Omega)\times
L^8(\Omega)$ to problem \eqref{inna5} be given by Lemma \ref{anna5},
and $(\beta,s)$ be given by \eqref{anna100}. Then
\begin{equation}\label{lena5}\begin{split}
   \|p_\epsilon(\varrho)\varphi^{1-\beta}\|_{L^{s}(\Omega)}
   \leq\\ c_e \big( 1
   +\|p_\epsilon(\varrho)\varphi^{-\beta}\|_{L^{1}(\Omega)}
+\|\varrho|\mathbf u|^2\varphi^{1-\beta}\|_{L^s(\Omega)}  +
\|\mathbf u\|_{W^{1,2}_0(\Omega)}\big).
\end{split}\end{equation}

\end{lem}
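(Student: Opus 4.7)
The plan is to carry out a classical Bogovskii-type pressure estimate, adapted so that the boundary weight $\varphi^{1-\beta}$ is baked into the test function. Let $F=p_\epsilon(\varrho)\varphi^{1-\beta}$, so that the goal is to bound $\|F\|_{L^s(\Omega)}$. I would take
\[
\boldsymbol\xi \;=\; \varphi^{1-\beta}\,\mathcal B\!\bigl(F^{s-1}-\overline{F^{s-1}}\bigr),
\]
where $\mathcal B$ is the Bogovskii operator on $\Omega$ (so $\text{div}\,\mathcal B(g)=g$ whenever $\int_\Omega g\,dx=0$, with $\|\nabla\mathcal B(g)\|_{L^r(\Omega)}\le c\|g\|_{L^r(\Omega)}$ for $1<r<\infty$), and $\overline{\,\cdot\,}$ denotes the mean over $\Omega$. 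Since $s-1$ is small and $\varphi$ is smooth and bounded, elementary computation shows $\boldsymbol\xi\in W^{1,2}_0(\Omega)$, so it is admissible in \eqref{anna7}.

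The product rule gives
\[
\text{div}\,\boldsymbol\xi \;=\; \varphi^{1-\beta}\bigl(F^{s-1}-\overline{F^{s-1}}\bigr)
\;+\;\nabla(\varphi^{1-\beta})\cdot\mathcal B\bigl(F^{s-1}-\overline{F^{s-1}}\bigr),
\]
so that
\[
\int_\Omega p_\epsilon\,\text{div}\,\boldsymbol\xi\,dx \;=\; \int_\Omega F^{s}\,dx \;+\;\text{error},
\]
the first equality precisely producing $\|F\|_{L^s}^s$. The error terms involve $\overline{F^{s-1}}\int p_\epsilon\varphi^{1-\beta}$ and $\int p_\epsilon\,\varphi^{-\beta}\,|\mathcal B(\ldots)|$; both will be absorbed by $\|p_\epsilon\varphi^{-\beta}\|_{L^1}$ after estimating $\mathcal B$ in $L^\infty$ (see below). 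The three remaining contributions from \eqref{anna7} are the viscous term $\int \mathbb S(\mathbf u):\nabla\boldsymbol\xi$, the convective term $\int\varrho\mathbf u\otimes\mathbf u:\nabla\boldsymbol\xi$, and the forcing term $\int\varrho\mathbf f\cdot\boldsymbol\xi$.

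To control them I would exploit that $s$ is so close to $1$ (by \eqref{anna101}) that $s'=s/(s-1)>3$, so that $W^{1,s'}\hookrightarrow L^\infty$; combined with Bogovskii this gives $\|\mathcal B(F^{s-1}-\overline{F^{s-1}})\|_{L^\infty}\le c\|F\|_{L^s}^{s-1}$. For the viscous term I would use $\|\mathbb S(\mathbf u)\|_{L^2}\le c\|\mathbf u\|_{W^{1,2}_0}$ and $\|\nabla\boldsymbol\xi\|_{L^2}\le c(1+\|F\|_{L^s}^{s-1})$, the latter following from $2(s-1)<1<s$ and Hölder. For the forcing term I would combine mass conservation with the $L^\infty$ bound above. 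For the convective term I split $\nabla\boldsymbol\xi$ into its two pieces: the $\varphi^{1-\beta}\nabla\mathcal B(\ldots)$ piece pairs naturally with $\varrho|\mathbf u|^2\varphi^{1-\beta}$ in $L^s/L^{s'}$ duality, yielding $\|\varrho|\mathbf u|^2\varphi^{1-\beta}\|_{L^s}\|F\|_{L^s}^{s-1}$, while the $\nabla\varphi^{1-\beta}\otimes\mathcal B(\ldots)$ piece carries the singular weight $\varphi^{-\beta}$ and must be handled via the Hardy inequality $\|\mathcal B/\varphi\|_{L^{s'}}\le c\|\nabla\mathcal B\|_{L^{s'}}$, valid since $\mathcal B$ has zero trace.

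Putting the pieces together, every right-hand side contribution carries an extra factor $\|F\|_{L^s}^{s-1}$ multiplying one of the quantities listed on the right of \eqref{lena5}. The resulting inequality $\|F\|_{L^s}^s\le c\,\|F\|_{L^s}^{s-1}\cdot(1+\|p_\epsilon\varphi^{-\beta}\|_{L^1}+\|\varrho|\mathbf u|^2\varphi^{1-\beta}\|_{L^s}+\|\mathbf u\|_{W^{1,2}_0})$ can then be divided through by $\|F\|_{L^s}^{s-1}$ (or equivalently treated by Young's inequality) to produce \eqref{lena5}. The main technical obstacle is the singular weight $\varphi^{-\beta}$ generated when the derivative falls on the factor $\varphi^{1-\beta}$ of the test function; overcoming it cleanly is exactly what forces the Hardy inequality into the argument and is the reason the small parameter $\theta$, hence the closeness of $s$ to $1$, is chosen as in \eqref{anna100}.
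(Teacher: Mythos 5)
Your proof takes essentially the same route as the paper's: the test field $\varphi^{1-\beta}\mathcal B(\cdot)$ (the paper's \eqref{lena9} has a typo, but the subsequent formula for $\partial_j\xi_i$ shows the intended weight is $\varphi^{1-\beta}$, exactly as you wrote), the bound $\|\mathcal B(g)\|_{L^\infty}\le c\|g\|_{L^{s/(s-1)}}$ coming from $s/(s-1)>3$, and the Hardy-type bound $\|\varphi^{-1}\mathcal B(g)\|_{L^{s/(s-1)}}\le c\|g\|_{L^{s/(s-1)}}$ (the paper's \eqref{lena8}) to control the term where the derivative hits the weight. The only cosmetic difference is that you pick the specific $g=F^{s-1}-\overline{F^{s-1}}$ and divide through by $\|F\|_{L^s}^{s-1}$ at the end, whereas the paper keeps $g$ arbitrary and concludes by $L^s$--$L^{s/(s-1)}$ duality; the two are equivalent.
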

\begin{proof}
Choose an arbitrary function $g\in L^{s/(s-1)}(\Omega)$. It follows
from the Bogovskii lemma that the problem
\begin{equation}\label{lena6}\begin{split}
\text{div~}\boldsymbol\omega=g-\frac{1}{|\Omega|}\int_\Omega g\,
dx\text{~~in~~}\Omega, \quad
\boldsymbol\omega=0\text{~~on~~}\partial\Omega,
\end{split}\end{equation}
has a solution $\boldsymbol\omega\in W^{1,s/(s-1)}_0(\Omega)$
satisfying the inequality
\begin{equation}\label{lena7}
    \|\boldsymbol\omega\|_{W^{1,s/(s-1)}(\Omega)}\leq
    c\|g\|_{L^{s/(s-1)}(\Omega)}.
\end{equation}
Since $s/(s-1)>3$, the embedding $
W^{1,s/(s-1)}_0(\Omega)\hookrightarrow C(\bar\Omega)$ is bounded. It
follows from this and general properties of $
W^{1,s/(s-1)}_0(\Omega)$ that
\begin{equation}\label{lena8}
\|\boldsymbol\omega\|_{L^{\infty}(\Omega)}+
\|\varphi^{-1}\boldsymbol\omega\|_{L^{s/(s-1)}(\Omega)}\leq
    c\|g\|_{L^{s/(s-1)}(\Omega)}.
\end{equation}
Introduce the vector field
\begin{equation}\label{lena9}
    \boldsymbol\xi(x)=\varphi(x)\boldsymbol\omega(x), \quad
    x\in\Omega.
\end{equation}
It is easily seen that
\begin{equation*}
   \partial_j\xi_i=\varphi^{1-\beta}H_{ij},
\text{~~where~~}
    H_{ij}=\partial_j\omega_i+(1-\beta)\varphi^{-1}
   \partial_j\varphi\,\,\omega_i.
\end{equation*}
Inequalities  \eqref{lena7} and \eqref{lena8} imply
\begin{equation}\label{lena10}
 \|H_{ij}\|_{L^{s/(s-1)}(\Omega)}\leq
    c\|g\|_{L^{s/(s-1)}(\Omega)}.
\end{equation}
In particular, we have
\begin{equation}\label{lena11}
 \|\boldsymbol \xi\|_{W^{1,s/(s-1)}(\Omega)}+
 \|\boldsymbol\xi\|_{L^{\infty}(\Omega)}\leq
    c\|g\|_{L^{s/(s-1)}(\Omega)}.
\end{equation}
Substituting $\boldsymbol\xi$ into \eqref{anna7} we arrive at
\begin{equation}\label{lena12}\begin{split}
\int_\Omega p_\epsilon(\varrho)\text{~div~}\boldsymbol\xi\, dx=
\int_\Omega\big(\mathbb S(\mathbf u):\nabla\boldsymbol\xi
-\varrho\mathbf f\cdot \boldsymbol\xi\big)\,dx- \int_\Omega
\varphi^{1-\beta}\varrho H_{ij}u_iu_j\,dx\\
\leq c\|\mathbf
u\|_{W^{1,2}_0(\Omega)}\|\boldsymbol\xi\|_{W^{1,2}_0(\Omega)}
+c\|\boldsymbol\xi\|_{L^\infty(\Omega)}+c\|\varphi^{1-\beta}\varrho|\mathbf
u|^2\|_{L^s(\Omega)}\|\mathbf
H\|_{L^{s/(s-1)}(\Omega)}\\
\leq c\big(1+\|\mathbf
u\|_{W^{1,2}_0(\Omega)}+\|\varphi^{1-\beta}\varrho|\mathbf
u|^2\|_{L^s(\Omega)}\big)\|g\|_{L^{s/(s-1)}(\Omega)}.
\end{split}\end{equation}
On the other hand, we have
\begin{equation*}\begin{split}
\int_\Omega p_\epsilon(\varrho)\text{~div~}\boldsymbol\xi\, dx=
\int_\Omega \varphi^{1-\beta}p_\epsilon(\varrho)g\,
dx-\frac{1}{|\Omega|}\int_\Omega g\, dx \int_\Omega
\varphi^{1-\beta}p_\epsilon(\varrho)\, dx+\\
(1-\beta)\int_\Omega\varphi^{-\beta}p_\epsilon(\varrho)\boldsymbol\omega\cdot\varphi\,
dx.
\end{split}\end{equation*}
Next, inequality  \eqref{lena8} implies
\begin{equation*}
\Big|\int_\Omega\varphi^{-\beta}p_\epsilon(\varrho)\boldsymbol\omega\cdot\varphi\,
dx\Big|\leq
c\|g\|_{L^{s/(s-1)}(\Omega)}\int_\Omega\varphi^{-\beta}p_\epsilon(\varrho)\,
dx,
\end{equation*}
which yields
\begin{equation*}\begin{split}
\int_\Omega p_\epsilon(\varrho)\text{~div~}\boldsymbol\xi\, dx\geq
\int_\Omega \varphi^{1-\beta}p_\epsilon(\varrho)g\, dx-\\c
\Big(\int_\Omega \varphi^{1-\beta}p_\epsilon(\varrho)\, dx+
\int_\Omega\varphi^{-\beta}p_\epsilon(\varrho)\,
dx\Big)\|g\|_{L^{s/(s-1)}(\Omega)}\geq\\ \int_\Omega
\varphi^{1-\beta}p_\epsilon(\varrho)g\, dx-c \Big(
\int_\Omega\varphi^{-\beta}p_\epsilon(\varrho)\,
dx\Big)\|g\|_{L^{s/(s-1)}(\Omega)}. \end{split}\end{equation*}
Combining this result with \eqref{lena12} we finally arrive at the
inequality
\begin{equation*}\begin{split}
\int_\Omega \varphi^{1-\beta} p_\epsilon(\varrho)g\, dx \leq
c\big(1+\|\mathbf
u\|_{W^{1,2}_0(\Omega)}+\|\varphi^{1-\beta}\varrho|\mathbf
u|^2\|_{L^s(\Omega)}\big)\|g\|_{L^{s/(s-1)}(\Omega)}+\\
c\|p_\epsilon(\varrho)\varphi^{-\beta}\|_{L^{1}(\Omega)}
\,\|g\|_{L^{s/(s-1)}(\Omega)},
\end{split}\end{equation*}
which  yields \eqref{lena5}.
\end{proof}
\section{Quantities $A$ and $B$}
\renewcommand{\theequation}{4.\arabic{equation}}
 Introduce the quantities
\begin{equation}\label{anna3}
    A=\int_\Omega\varrho |\mathbf u|^{2(2-\theta)}\varphi^{2\beta}\,
    dx,\quad B=\int_\Omega \varrho^\gamma\varphi^{-\beta}\,
    dx.
\end{equation}

\begin{lem} Let $(\varrho, \mathbf u)\in W^{1,2}_0(\Omega)\times L^8(\Omega)$
be a weak  solution to problem \eqref{inna5}, and let $(\theta,
\beta, s)$ be given by relations \eqref{anna100}. Then
\begin{subequations}\label{anna15}
\begin{gather}\label{anna15a}
    \|\mathbf u\|_{W^{1,2}_0(\Omega)}\leq c_e
    A^{\frac{1}{4(2-\theta)}}\,
    B^{\frac{1}{2(2-\theta)(2\gamma-1)}},\\\label{anna15b}
    \|\varrho|\mathbf u|^2\varphi^{1-\beta}\|_{L^s(\Omega)}\leq c_e
    A^{\frac{1}{2-\theta}}\,
    B^{\frac{\theta}{(2-\theta)(2\gamma-1)}}.
\end{gather}
\end{subequations}
\end{lem}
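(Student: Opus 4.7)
Both inequalities will be derived by H\"older interpolation balancing $A$, $B$, the mass constraint $\int_\Omega\varrho\,dx=M$, and the local integrability of $\varphi^{-r}$ near $\partial\Omega$ for $r<1$. For \eqref{anna15a} I would start from the energy identity \eqref{anna11}. Coercivity of $\mathbb{S}$ (from $\mu>0$ and $\mu+\nu/3>0$) combined with Korn's and Poincar\'e's inequalities on $W^{1,2}_0(\Omega)$ yields
\begin{equation*}
\|\mathbf{u}\|_{W^{1,2}_0(\Omega)}^{2}\leq c_e\int_\Omega\mathbb{S}(\mathbf{u}):\nabla\mathbf{u}\,dx=c_e\int_\Omega\varrho\mathbf{f}\cdot\mathbf{u}\,dx\leq c_e\int_\Omega\varrho|\mathbf{u}|\,dx,
\end{equation*}
so it suffices to bound $\int_\Omega\varrho|\mathbf{u}|\,dx$ by $A^{1/(2(2-\theta))}B^{1/((2-\theta)(2\gamma-1))}$ up to a constant.

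I would decompose the integrand as
\begin{equation*}
\varrho|\mathbf{u}|=\bigl(\varrho|\mathbf{u}|^{2(2-\theta)}\varphi^{2\beta}\bigr)^{a}\bigl(\varrho^{\gamma}\varphi^{-\beta}\bigr)^{b}\varrho^{c}\varphi^{d},
\end{equation*}
with $a=1/(2(2-\theta))$ dictated by the $|\mathbf{u}|$-exponent, $b=1/((2-\theta)(2\gamma-1))$ chosen to produce the target power of $B$, and $c=1-a-\gamma b$, $d=\beta(b-2a)$ determined by matching total powers of $\varrho$ and $\varphi$. Direct computation gives $c\geq 0$ and $1-a-b-c=(\gamma-1)b>0$ whenever $\gamma>1$, and $d/(1-a-b-c)=-2\beta>-1$ by $\beta<1/2$ (see \eqref{anna101}). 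H\"older's inequality with exponents $1/a$, $1/b$, $1/c$, $1/((\gamma-1)b)$ then gives
\begin{equation*}
\int_\Omega\varrho|\mathbf{u}|\,dx\leq A^{a}B^{b}M^{c}\Bigl(\int_\Omega\varphi^{-2\beta}\,dx\Bigr)^{(\gamma-1)b},
\end{equation*}
and since the last integral is finite, taking square roots yields \eqref{anna15a}.

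For \eqref{anna15b} I would raise the $L^{s}$ norm to the $s$-th power and apply the analogous four-factor decomposition of $\varrho^{s}|\mathbf{u}|^{2s}\varphi^{s(1-\beta)}$ with $a'=s/(2-\theta)$, $b'=s\theta/((2-\theta)(2\gamma-1))$, and $c'$, $d'$ again fixed by matching the powers of $\varrho$ and $\varphi$. H\"older then yields
\begin{equation*}
\int_\Omega\varrho^{s}|\mathbf{u}|^{2s}\varphi^{s(1-\beta)}\,dx\leq c_{e}A^{a'}B^{b'},
\end{equation*}
and extracting the $1/s$-th root delivers \eqref{anna15b}.

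The main obstacle is purely algebraic bookkeeping: one must verify that for the parameters fixed in \eqref{anna100} the four H\"older exponents are non-negative with reciprocals summing to $1$, and that the residual $\varphi$-weight is locally integrable. The specific formulas $\theta=(1-\gamma^{-1})/8$, $\beta=3(1-8\theta^{2})/(2(3-8\theta^{2}))$, $s=1+2\theta^{2}$ in \eqref{anna100} are calibrated precisely so that these conditions collapse to the single inequality $\beta<1/2$ already recorded in \eqref{anna101}.
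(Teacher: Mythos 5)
Your proposal is correct and follows essentially the same route as the paper: both start from the energy identity \eqref{anna11} to reduce \eqref{anna15a} to an estimate of $\|\varrho\mathbf{u}\|_{L^1(\Omega)}$, and both prove each inequality by a four-factor H\"older decomposition into powers of $\varrho|\mathbf{u}|^{2(2-\theta)}\varphi^{2\beta}$, $\varrho^\gamma\varphi^{-\beta}$, $\varrho$, and a residual $\varphi$-weight, with the exponents being exactly the paper's $\alpha_i$ (resp.\ $\beta_i$, up to writing $\varrho^{\alpha_4}\varphi^{-2\beta\alpha_3}$ as $\varrho^{c}\varphi^{d}$). Your check that the residual $\varphi$-exponent, divided by the fourth H\"older weight $(\gamma-1)b=\alpha_3$, equals $-2\beta>-1$ is precisely the integrability condition the paper verifies; for \eqref{anna15b} the analogous check reduces, as the paper notes, to the positivity of the leftover power $\varkappa$, which is where the precise calibration of $\theta,\beta,s$ in \eqref{anna100} enters.
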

\begin{proof}
Integral identity  \eqref{anna11}  implies
$$
\int_\Omega \big(\mu|\nabla\mathbf
u|^2+(\nu+\frac{\mu}{3})|\text{div~}|^2\big)\,
dx=\int_\Omega\varrho\mathbf u\cdot\mathbf f\, dx\leq
c_e\|\varrho\mathbf u\|_{L^1(\Omega)}.
$$
Since $\mu>0$ and  $\mu/3+\nu>0$, we have
\begin{equation}\label{anna16}
\|\mathbf u\|_{W^{1,2}_0(\Omega)}^2\leq c_e \|\varrho\mathbf
u\|_{L^1(\Omega)}.
\end{equation}
Introduce the quantities
\begin{equation}\label{anna17}\begin{split}
    \alpha_1=\frac{1}{2(2-\theta)},\quad
    \alpha_2=\frac{1}{(2-\theta)(2\gamma-1)}, \\
    \alpha_3=\frac{\gamma-1}{(2-\theta)(2\gamma-1)}, \quad
    \alpha_4=\frac{4\gamma-3-2\theta(2\gamma-1)}{2(2-\theta)(2\gamma-1)}.
\end{split}\end{equation} It follows from \eqref{anna100}
that $\alpha_i$ are positive and
$$
\alpha_1+\alpha_2+\alpha_3+\alpha_4=1, \quad
\alpha_1+\gamma\alpha_2+\alpha_4=1, \quad
2\beta\alpha_1-\beta\alpha_2- 2\beta\alpha_3=0.
$$
We thus get
$$
|\varrho\mathbf u|=(\varrho|\mathbf
u|^{2(2-\theta)}\varphi^{2\beta})^{\alpha_1}\, (\varrho^\gamma
\varphi^{-\beta})^{\alpha_2}(\varphi^{-2\beta})^{\alpha_3}\,
\varrho^{\alpha_4}.
$$
Applying the H\"{o}lder inequality and recalling \eqref{anna3} and
\eqref{inna5c} we arrive at
$$
\|\varrho\mathbf u\|_{L^1(\Omega)}\leq A^{\alpha_1} \,
B^{\alpha_2}\|\varphi^{-2\beta}\|_{L^1(\Omega)}^{\alpha_3}M^{\alpha_4}\leq
c_e A^{\frac{1}{2(2-\theta)}}\,
    B^{\frac{1}{(2-\theta)(2\gamma-1)}}
$$
Inserting  this inequalities into \eqref{anna16} we obtain
\eqref{anna15a}. In order to prove \eqref{anna15b}, notice that
\begin{equation*}
    (\varrho|\mathbf u|^{2}\varphi^{1-\beta})^s=
    (\varrho|\mathbf
    u|^{2(2-\theta)}\varphi^{2\beta})^{\beta_1}\,
    (\varrho^{\gamma}\varphi^{-\beta})^{\beta_2}\,
    (\varphi^{-2\beta})^{\beta_3}\,(\varrho)^{\beta_4}\varphi^\varkappa,
\end{equation*}
where
\begin{equation*}\begin{split}
    \beta_1=\frac{s}{2-\theta}, \quad
    \beta_2=\frac{s\theta}{(2-\theta)(2\gamma-1)}, \quad \beta_3=
    1-s\big(1-\frac{\theta(\gamma-1)}{(2-\theta)(2\gamma-1)}\big),
    \\
    \beta_4=s\frac{(1-\theta)(2\gamma-1)-\gamma\theta}{(2-\theta)(2\gamma-1)},
\quad
    \varkappa=s(1-\beta)+\beta\beta_2+2\beta \beta_3-2\beta \beta_1.
\end{split}\end{equation*} It follows from \eqref{anna100}
that $\beta_i$ are positive and
\begin{equation*}
    \beta_1+\beta_2+\beta_3+\beta_4=1.
    \end{equation*}
Moreover, we have
$$
2\beta_3+\beta_2-2\beta_1=2-2s+\frac{2s\theta(\gamma-1)}{(2-\theta)(2\gamma-1)}
+\frac{s\theta}{(2-\theta)(2\gamma-1)}
-\frac{2s(2\gamma-1)}{(2-\theta)(2\gamma-1)} =2-3s.
$$
It follows that
$$ \varkappa=s+\beta(2-4s)=\frac{1}{3-8\theta^2}\big(2\theta^2+80\theta^4)>0. $$
 In particular, we have
\begin{equation*}
    (\varrho|\mathbf u|^{2}\varphi^{1-\beta})^s\leq c_e
    (\varrho|\mathbf
    u|^{2(2-\theta)}\varphi^{2\beta})^{\beta_1}
    (\varrho^{\gamma}\varphi^{-\beta})^{\beta_2}
    (\varphi^{-2\beta})^{\beta_3}(\varrho)^{\beta_4}.
\end{equation*}
Applying the H\"{o}lder inequality we obtain
\begin{equation*}
\|\varrho|\mathbf u|^2\varphi^{1-\beta}\|_{L^s(\Omega)}\leq c_e
    A^{\frac{1}{2-\theta}}\,
    B^{\frac{\theta}{(2-\theta)(2\gamma-1)}}M^{\beta_4/s}\Big(\int_\Omega
    \varphi^{-2\beta}\, dx \Big)^{\beta_3/s}\leq c_e A^{\frac{1}{2-\theta}}\,
    B^{\frac{\theta}{(2-\theta)(2\gamma-1)}},
\end{equation*}
which yields \eqref{anna15b}.
\end{proof}

The following lemma gives  the estimates of the pressure function
and the energy density in terms of the quantity $A$.

\begin{lem}\label{lena13}
Let  a solution
  $(\mathbf u, \varrho)\in W^{1,2}_0(\Omega)\times
L^8(\Omega)$ to problem \eqref{inna5} be given by Lemma \ref{anna5}.
Furthermore assume that  $\theta$, $\beta$, and $s$ are given by
\eqref{anna100}. Then
\begin{gather}\label{lena15}
 \|p_\epsilon(\varrho)\varphi^{1-\beta}\|_{L^{s}(\Omega)}+
 \|\varphi^{-\beta}p_\epsilon(\varrho)\|_{L^1(\Omega)}+\\\nonumber
 \|\varphi^{-\beta}\varrho (\nabla\varphi\cdot \mathbf
 u)^2\|_{L^1(\Omega)}\leq c_e(1+A^{(1+\theta)/2}),\\
 \label{lena16}
 \|\mathbf u\|_{W^{1,2}_0(\Omega)}\leq c_e(1+A^{(1-2\theta)/4}),\\
 \label{lena17}
\|\varphi^{1-\beta}\varrho |\mathbf u|^2\|_{L^s(\Omega)}\leq c_e
(1+A^{(1+\theta)/2}).
\end{gather}

\end{lem}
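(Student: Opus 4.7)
Set
\begin{equation*}
\mathcal{E}:=\|p_\epsilon(\varrho)\varphi^{1-\beta}\|_{L^s(\Omega)}+\|p_\epsilon(\varrho)\varphi^{-\beta}\|_{L^1(\Omega)}+\|\varphi^{-\beta}\varrho(\nabla\varphi\cdot\mathbf u)^2\|_{L^1(\Omega)},
\end{equation*}
which is the left hand side of \eqref{lena15}. The plan is to close a nonlinear inequality for $\mathcal E$ in terms of $A$ alone, exploiting the monotonicity $B\leq\mathcal E$ coming from $\varrho^\gamma\leq p_\epsilon(\varrho)$ together with a Young absorption argument.

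\textbf{Step 1.} Chain Lemmas \ref{lena1} and \ref{lena4}. Lemma \ref{lena1} controls the second and third contributions to $\mathcal E$ by $c_e\bigl(1+\|\mathbf u\|_{W^{1,2}_0}+\|p_\epsilon\varphi^{1-\beta}\|_{L^1}+\|\varrho|\mathbf u|^2\varphi^{1-\beta}\|_{L^1}\bigr)$; H\"older upgrades the two $L^1$ norms to $L^s$ norms with factor $|\Omega|^{(s-1)/s}$. Lemma \ref{lena4} then bounds $\|p_\epsilon\varphi^{1-\beta}\|_{L^s}$ by $c_e\bigl(1+\|\mathbf u\|_{W^{1,2}_0}+\|p_\epsilon\varphi^{-\beta}\|_{L^1}+\|\varrho|\mathbf u|^2\varphi^{1-\beta}\|_{L^s}\bigr)$. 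Substituting the second into the first and absorbing the pieces of $\mathcal E$ that reappear on the right (this is the delicate point) yields
\begin{equation*}
\mathcal E\;\leq\;c_e\bigl(1+\|\mathbf u\|_{W^{1,2}_0(\Omega)}+\|\varrho|\mathbf u|^2\varphi^{1-\beta}\|_{L^s(\Omega)}\bigr).
\end{equation*}

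\textbf{Step 2.} Inserting \eqref{anna15a} and \eqref{anna15b} along with $B\leq\mathcal E$ gives
\begin{equation*}
\mathcal E\;\leq\;c_e\bigl(1+A^{a_1}\mathcal E^{b_1}+A^{a_2}\mathcal E^{b_2}\bigr),
\end{equation*}
where $a_1=1/(4(2-\theta))$, $b_1=1/(2(2-\theta)(2\gamma-1))$, $a_2=1/(2-\theta)$, $b_2=\theta/((2-\theta)(2\gamma-1))$, all in $(0,1)$ by \eqref{anna101}.

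\textbf{Step 3.} Young's inequality $A^{a_i}\mathcal E^{b_i}\leq \delta\mathcal E+c(\delta)A^{a_i/(1-b_i)}$ with small $\delta>0$ absorbs $\mathcal E$ on the left and yields $\mathcal E\leq c_e\bigl(1+A^{\max_i a_i/(1-b_i)}\bigr)$. Using the identity $2\gamma-1=(1+8\theta)/(1-8\theta)$ implicit in $\theta=(1-\gamma^{-1})/8$ from \eqref{anna100}, the inequality $\max_i a_i/(1-b_i)\leq (1+\theta)/2$ reduces to an elementary polynomial comparison in $\theta\in(0,1/8)$. This proves \eqref{lena15}.

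\textbf{Step 4.} Plug $B\leq\mathcal E\leq c_e\bigl(1+A^{(1+\theta)/2}\bigr)$ back into \eqref{anna15a} and \eqref{anna15b}. The resulting exponents $a_1+(1+\theta)b_1/2$ and $a_2+(1+\theta)b_2/2$ are bounded above by $(1-2\theta)/4$ and $(1+\theta)/2$ respectively, again via the same type of polynomial check with $2\gamma-1=(1+8\theta)/(1-8\theta)$; this gives \eqref{lena16} and \eqref{lena17}.

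The principal obstacle is Step 1: the bounds from Lemmas \ref{lena1} and \ref{lena4} couple in such a way that parts of $\mathcal E$ reappear on the right hand side with an $O(1)$ coefficient. Absorbing them requires careful tracking of constants, exploiting in particular that $s=1+2\theta^2$ is close to $1$, so that the H\"older factor $|\Omega|^{(s-1)/s}$ is close to $1$ as well. Once this is done the rest of the argument is a routine nonlinear Young absorption combined with exponent bookkeeping.
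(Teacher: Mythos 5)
There is a genuine gap in Step~1, and the mechanism you propose to fill it does not work. After chaining Lemmas \ref{lena1} and \ref{lena4}, the right-hand side contains the term $\|p_\epsilon\varphi^{1-\beta}\|_{L^1(\Omega)}$, which reappears there with a general constant $c_e$ (not a constant tending to $1$). Replacing the $L^1$-norm by $|\Omega|^{(s-1)/s}\|p_\epsilon\varphi^{1-\beta}\|_{L^s}$ by H\"older therefore yields an inequality of the schematic form $\mathcal E\le c_e\,\mathcal E+\dots$, and no absorption is possible: the fact that $|\Omega|^{(s-1)/s}$ is close to $1$ is irrelevant, since the obstruction is the constant $c_e$ coming from the Bogovskii estimate in Lemma \ref{lena4}, which is certainly not close to zero. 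A linear bound of $\|p_\epsilon\varphi^{1-\beta}\|_{L^1}$ by $\|p_\epsilon\varphi^{1-\beta}\|_{L^s}$ cannot break the circularity.

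The missing idea is to make that bound \emph{sublinear}, which the paper does by an interpolation anchored at a lower endpoint that is controlled by mass conservation. Writing $g=(p_\epsilon(\varrho)\varphi^{1-\beta})^{1/4}$ and noting that $p_\epsilon^{1/4}\le c(1+\varrho)$ for $\gamma\le 2$ and $\epsilon\le 1$, one gets $\|g\|_{L^1(\Omega)}\le c_e$ directly from $\int_\Omega\varrho\,dx=M$. Interpolating $\|g\|_{L^4}$ between $\|g\|_{L^1}$ and $\|g\|_{L^{4s}}$ (i.e.\ applying \eqref{lena17a} with $\sigma=1$, $r=4$, $\tau=4s$) gives
\begin{equation*}
\|p_\epsilon(\varrho)\varphi^{1-\beta}\|_{L^1(\Omega)}\le c_e\,\|p_\epsilon(\varrho)\varphi^{1-\beta}\|_{L^s(\Omega)}^{\,3s/(4s-1)},\qquad \frac{3s}{4s-1}<1,
\end{equation*}
and the strictly sublinear exponent is what allows Young's inequality to absorb this term into the left-hand side before one ever introduces $A$ and $B$. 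Once this is done, your Steps~2--4 (bounding $B$ by $\mathcal E$, Young absorption of $\delta B$, and exponent bookkeeping with $2\gamma-1=(1+8\theta)/(1-8\theta)$) coincide with the paper's $\kappa_1,\kappa_2,\kappa_3,\kappa_4$ computations and are correct.
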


\begin{proof}
Combining estimates \eqref{lena2} and \eqref{lena5} we obtain
\begin{equation}\label{lena16a}\begin{split}
     \|p_\epsilon(\varrho)\varphi^{1-\beta}\|_{L^{s}(\Omega)}+
 \|\varphi^{-\beta}p_\epsilon(\varrho)\|_{L^1(\Omega)}+
 \|\varphi^{-\beta}\varrho (\nabla\varphi\cdot \mathbf
 u)^2\|_{L^1(\Omega)}\leq \\
 c_e(1+\|p_\epsilon(\varrho)\varphi^{1-\beta}\|_{L^1(\Omega)}+\|\varrho|\mathbf
 u|^2\varphi^{1-\beta}\|_{L^s(\Omega)}+\|\mathbf
 u\|_{W^{1,2}_0(\Omega)}).
\end{split}\end{equation}
Let us estimate
$\|p_\epsilon(\varrho)\varphi^{1-\beta}\|_{L^1(\Omega)}$. Recall
that for every integrable function $g$, for every $1\leq \sigma\leq
\tau\leq \infty$, and for every $\upsilon \in(0,1)$, we have
\begin{equation}\label{lena17a}
    \|g\|_{L^{r}(\Omega)}\leq
    \|g\|_{L^\sigma(\Omega)}^{\upsilon \sigma/r}
    \|g\|_{L^\tau(\Omega)}^{(1-\upsilon )\tau/r} \text{~~where~~}
    r=\upsilon \sigma+(1-\upsilon) \tau.
\end{equation}
Now set
$$
g=(p_\varepsilon(\varrho)\varphi^{1-\beta})^{1/4}, \quad r=4,\quad
\sigma=1, \quad \tau=4s, \quad \upsilon=\frac{4s-4}{4s-1}.
$$
Obviously we have
$$
\|g\|_{L^{r}(\Omega)}=\|p_\epsilon(\varrho)\varphi^{1-\beta}\|_{L^1(\Omega)}^{1/4},
\quad
\|g\|_{L^{\tau}(\Omega)}=\|p_\epsilon(\varrho)\varphi^{1-\beta}\|_{L^s(\Omega)}
^{1/4}.
$$
On the other hand, relations \eqref{inna4}, \eqref{inna5c}, and
\eqref{inna3} imply
$$
\|g\|_{L^1(\Omega)}=\|p_\varepsilon^{1/4}\varphi^{(1-\beta)/4}\|_{L^1(\Omega)}
\leq c_e\|\varrho\|_{L^1(\Omega)}\leq c_e.
$$
Inserting  these estimates into  \eqref{lena17a} we arrive at
\begin{equation*}
    \|p_\epsilon(\varrho)\varphi^{1-\beta}\|_{L^1(\Omega)}\leq
    \|p_\epsilon(\varrho)\varphi^{1-\beta}\|_{L^s(\Omega)}^
    {(1-\upsilon)\tau/r}
\end{equation*}
Substituting this inequality into \eqref{lena16a} and noting that
$(1-\upsilon)\tau/r<1$ we get
\begin{equation*}\begin{split}
     \|p_\epsilon(\varrho)\varphi^{1-\beta}\|_{L^{s}(\Omega)}+
 \|\varphi^{-\beta}p_\epsilon(\varrho)\|_{L^1(\Omega)}+
 \|\varphi^{-\beta}\varrho (\nabla\varphi\cdot \mathbf
 u)^2\|_{L^1(\Omega)}\leq \\
 c_e(1+\|\varrho|\mathbf
 u|^2\varphi^{1-\beta}\|_{L^s(\Omega)}+\|\mathbf
 u\|_{W^{1,2}_0(\Omega)}).
\end{split}\end{equation*}
From this and \eqref{anna15} we obtain
\begin{equation}\label{lena18}\begin{split}
\|p_\epsilon(\varrho)\varphi^{1-\beta}\|_{L^{s}(\Omega)}+
 \|\varphi^{-\beta}p_\epsilon(\varrho)\|_{L^1(\Omega)}+
 \|\varphi^{-\beta}\varrho (\nabla\varphi\cdot \mathbf
 u)^2\|_{L^1(\Omega)}\leq \\
 c_e(1+A^{\frac{1}{4(2-\theta)}}\,
    B^{\frac{1}{2(2-\theta)(2\gamma-1)}}+
    A^{\frac{1}{2-\theta}}\,
    B^{\frac{\theta}{(2-\theta)(2\gamma-1)}}).
\end{split}\end{equation}
Next, it follows from the Young inequality  that for every
$\delta>0$,
\begin{equation*}
A^{\frac{1}{4(2-\theta)}}\,
    B^{\frac{1}{2(2-\theta)(2\gamma-1)}}\leq \delta
    B+c(\delta)A^{\kappa_1},
\end{equation*}
where
$$
\kappa_1=\frac{1}{2}\frac{(2\gamma-1)}{
2(2-\theta)(2\gamma-1)-1}=\frac{1}{2}\frac{1+8\theta}{2(2-\theta)(1+8\theta)-
(1-8\theta)}<1/2.
$$
Similarly, we have
\begin{equation*}
 A^{\frac{1}{2-\theta}}\,
    B^{\frac{\theta}{(2-\theta)(2\gamma-1)}})\leq \delta
    B+c(\delta)A^{\kappa_2},
\end{equation*}
where
$$
\kappa_2=\frac{2\gamma-1}{(2-\theta)(2\gamma-1)-\theta}=\frac{1}{2}+
\frac{\theta}{2}\frac{1}{1+7\theta}<\frac{1}{2}+ \frac{\theta}{2}.
$$
 We
thus get
\begin{equation*}
A^{\frac{1}{4(2-\theta)}}\,
    B^{\frac{1}{2(2-\theta)(2\gamma-1)}}\leq \delta
    B+c(\delta)A^{1/2},
\quad A^{\frac{1}{2-\theta}}\,
    B^{\frac{\theta}{(2-\theta)(2\gamma-1)}}\leq \delta
    B+c(\delta)A^{1/2+\theta/2}.
\end{equation*}
Inserting these inequalities into \eqref{lena18} we obtain
\begin{equation*}\begin{split}
\|p_\epsilon(\varrho)\varphi^{1-\beta}\|_{L^{s}(\Omega)}+
 \|\varphi^{-\beta}p_\epsilon(\varrho)\|_{L^1(\Omega)}+
 \|\varphi^{-\beta}\varrho (\nabla\varphi\cdot \mathbf
 u)^2\|_{L^1(\Omega)}\leq \\c\delta B+
 c(\delta)(1+A^{1/2+\theta/2}).
\end{split}\end{equation*}
Noting that
\begin{equation}\label{lena19}
B=\|\varphi^{-\beta}\varrho^\gamma\|_{L^1(\Omega)}\leq
c\|\varphi^{-\beta}p_\epsilon(\varrho)\|_{L^1(\Omega)},
\end{equation}
we arrive at
\begin{equation*}\begin{split}
\|p_\epsilon(\varrho)\varphi^{1-\beta}\|_{L^{s}(\Omega)}+(1-
c\delta)
 \|\varphi^{-\beta}p_\epsilon(\varrho)\|_{L^1(\Omega)}+
 \|\varphi^{-\beta}\varrho (\nabla\varphi\cdot \mathbf
 u)^2\|_{L^1(\Omega)}\leq \\
 c(\delta)(1+A^{1/2+\theta/2}).
\end{split}\end{equation*}
Choosing $\delta>0$ sufficiently small we obtain the desired
estimate \eqref{lena15}.

Now our task is  to derive  estimates \eqref{lena16} and
\eqref{lena17}. We begin with the observations that inequalities
\eqref{lena15} and \eqref{lena19} imply
\begin{equation}\label{lena20}
    B
    \leq c_e(1+A^{1/2+\theta/2}).
\end{equation}
 Inserting this estimate into \eqref{anna15a}
we obtain
\begin{equation}\label{lena21}
\|\mathbf u\|_{W^{1,2}_0(\Omega)}\leq c_e
    A^{\frac{1}{4(2-\theta)}}(1+A^{1/2+\theta/2})^{\frac{1}{2(2-\theta)(2\gamma-1)}}\,
        \leq c_e(1+A^{\kappa_3}),
\end{equation}
where
$$
\kappa_3=\frac{1}{4(2-\theta)}+
\frac{1+\theta}{4}\frac{1}{(2-\theta)(2\gamma-1)}=\frac{1}{4(2-\theta)}
\frac{2\gamma+\theta}{2\gamma-1}\leq \frac{1}{4(2-\theta)}
\frac{2+\theta}{1+8\theta}.
$$
Here we use the relations $2-\gamma^{-1}=1+8\theta$ and
$\gamma^{-1}< 1$. It follows that
$$
\frac{1-2\theta}{4}-\kappa_3\geq  \frac{1}{4(2-\theta)} h(\theta),
\text{~~where~~} h(\theta)=
(2-\theta)(1-2\theta)-\frac{2+\theta}{1+8\theta}.
$$
It is easily seen that
$$
(1+8\theta)h(\theta)=2\theta(5-19\theta+8\theta^2)>0,
$$
since $\theta\in (0,1/8)$. Hence $\kappa_3\leq 1/4-\theta/2$, which
along with \eqref{lena21} yields \eqref{lena16}.
 It remains to prove \eqref{lena17}.
Combining estimates \eqref{anna15b} and \eqref{lena20} we obtain
\begin{equation}\label{lena22}
    \|\varrho|\mathbf u|^2\varphi^{1-\beta}\|_{L^q(\Omega)}\leq c_e
    A^{\frac{1}{2-\theta}}\,
    B^{\frac{\theta}{(2-\theta)(2\gamma-1)}}\leq c_e(1+A^{\kappa_4}),
\end{equation}
where
$$
\kappa_4=\frac{1}{2-\theta}+
\frac{1+\theta}{2}\frac{\theta}{(2-\theta)(2\gamma-1)}.
$$
It is easy to check that
\begin{equation*}\begin{split}
\frac{1+\theta}{2}-\kappa_4=\frac{\theta(1+\theta)}{2(2-\theta)}\big(
\frac{1-\theta}{1+\theta}-\frac{1}{2\gamma-1}\big)=
\frac{\theta(1+\theta)}{2(2-\theta)}\big(
\frac{1-\theta}{1+\theta}-\frac{1-8\theta}{1+8\theta}\big)>0.
\end{split}\end{equation*}
Hence $\kappa_4\leq (1+\theta)/2$, which along with \eqref{lena22}
yields \eqref{lena17}.
\end{proof}

 The following lemma gives the  weighted estimates for the energy
density in terms of $A$.
\begin{lem}\label{sura2}
Let  a solution
  $(\mathbf u, \varrho)\in W^{1,2}_0(\Omega)\times
L^8(\Omega)$ to problem \eqref{inna5} be given by Lemma \ref{anna5}.
Then for every $\alpha\in (0,1)$ and $x_0\in\Omega$, we have
\begin{equation}\label{sura3}
    \int_\Omega \big(p_\epsilon(\varrho)+\varrho|\mathbf
    u|^2\big)(x)\, \varphi(x)^{3/2-\beta}\, |x-x_0|^{-\alpha}\, dx
    \leq c(1+A^{(1+\theta)/2}),
\end{equation}
where $c$ depends only on $c_e$ and $\alpha$.

\end{lem}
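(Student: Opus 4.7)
The plan is to test the momentum identity \eqref{anna7} against the vector field
\begin{equation*}
\xi_i(x)=\varphi(x)^{3/2-\beta}\,\partial_i\bigl(|x-x_0|^{2-\alpha}\bigr)=(2-\alpha)\,\varphi(x)^{3/2-\beta}|x-x_0|^{-\alpha}(x_i-x_{0,i}).
\end{equation*}
Because $3/2-\beta>0$, the field vanishes on $\partial\Omega$; because $1-\alpha>0$, it is bounded and continuous on $\overline\Omega$; and because $2\alpha<3$, its gradient --- pointwise dominated by $\varphi^{1/2-\beta}|x-x_0|^{1-\alpha}+\varphi^{3/2-\beta}|x-x_0|^{-\alpha}$ --- lies in $L^2(\Omega)$ with norm controlled by a constant $c(\alpha)$ independent of $x_0\in\Omega$. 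Hence $\xi\in W^{1,2}_0(\Omega)$ is admissible.

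The central feature of this choice is the product form $\xi_i=f(x)(x_i-x_{0,i})$ with $f=(2-\alpha)\varphi^{3/2-\beta}|x-x_0|^{-\alpha}$. Setting $y=x-x_0$ and using $\Delta|y|^{2-\alpha}=(2-\alpha)(3-\alpha)|y|^{-\alpha}$, a direct computation gives
\begin{align*}
\mathrm{div}\,\xi &= (2-\alpha)(3-\alpha)\,\varphi^{3/2-\beta}|y|^{-\alpha}+(2-\alpha)(3/2-\beta)\,\varphi^{1/2-\beta}(\nabla\varphi\cdot y)|y|^{-\alpha},\\
u_iu_j\partial_j\xi_i &= f\,|\mathbf u|^2+(\mathbf u\cdot y)(\mathbf u\cdot\nabla f),
\end{align*}
and the second identity expands into $(2-\alpha)\varrho|\mathbf u|^2\varphi^{3/2-\beta}|y|^{-\alpha}$ plus an off-diagonal $\nabla\varphi$-contribution plus the strictly negative piece $-\alpha(2-\alpha)\varphi^{3/2-\beta}|y|^{-\alpha-2}(\mathbf u\cdot y)^2$. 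Collecting principal terms on the left of \eqref{anna7} produces
\begin{equation*}
(2-\alpha)(3-\alpha)\!\int p_\epsilon\varphi^{3/2-\beta}|y|^{-\alpha}dx+(2-\alpha)\!\int\varrho|\mathbf u|^2\varphi^{3/2-\beta}|y|^{-\alpha}dx-\alpha(2-\alpha)\!\int\varrho\varphi^{3/2-\beta}|y|^{-\alpha-2}(\mathbf u\cdot y)^2 dx,
\end{equation*}
plus two $\nabla\varphi$-error terms.

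The definite-sign quantity is bounded in absolute value by $\alpha(2-\alpha)\int\varrho|\mathbf u|^2\varphi^{3/2-\beta}|y|^{-\alpha}dx$ via $|\mathbf u\cdot y|\leq|\mathbf u||y|$, hence can be absorbed into the kinetic integral on the left, leaving the strictly positive coefficient $(2-\alpha)(1-\alpha)$ on that term. The two $\nabla\varphi$-errors are bounded by $c_e(1+A^{(1+\theta)/2})$ using, respectively, $\int p_\epsilon\varphi^{-\beta}dx\leq c_e(1+A^{(1+\theta)/2})$ from Lemma \ref{lena13} and the Cauchy--Schwarz estimate
\begin{equation*}
\Big|\!\int\varrho(\mathbf u\cdot y)(\mathbf u\cdot\nabla\varphi)\varphi^{1/2-\beta}|y|^{-\alpha}dx\Big|\leq\big\|\varphi^{-\beta}\varrho(\nabla\varphi\cdot\mathbf u)^2\big\|_{L^1}^{1/2}\,\big\|\varrho|\mathbf u|^2\varphi^{1-\beta}|y|^{2-2\alpha}\big\|_{L^1}^{1/2},
\end{equation*}
with both factors bounded by Lemma \ref{lena13}. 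Finally, $|\int \mathbb S(\mathbf u):\nabla\xi\,dx|+|\int\varrho\mathbf f\cdot\xi\,dx|\leq c(\alpha)(\|\mathbf u\|_{W^{1,2}_0}+c_e)\leq c(\alpha)c_e(1+A^{(1+\theta)/2})$ via \eqref{lena16}. Dividing through by the positive coercivity constants yields \eqref{sura3}.

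The main obstacle is that a direct Hölder bound based on the $L^s$ estimates of Lemma \ref{lena13} does not suffice for $\alpha$ close to one, since the dual exponent $s/(s-1)$ is very large and the factor $\varphi^{1/2}|x-x_0|^{-\alpha}$ fails to be $L^{s/(s-1)}$-integrable uniformly in $x_0$. The Rellich-type product choice $\xi_i=f\cdot(x_i-x_{0,i})$ is precisely what converts the convective contribution into the target weighted kinetic-energy integral (with an absorbable definite-sign remainder), bypassing this Hölder bottleneck.
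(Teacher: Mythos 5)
Your proposal is correct and coincides with the paper's own argument: up to the harmless multiplicative constant $(2-\alpha)$, your test field $\xi_i=\varphi^{3/2-\beta}\partial_i(|x-x_0|^{2-\alpha})$ is exactly the paper's choice $\boldsymbol\xi=\varphi^{3/2-\beta}|x-x_0|^{-\alpha}(x-x_0)$, and the subsequent decomposition of $\mathrm{div}\,\boldsymbol\xi$ and $\varrho\,u_iu_j\partial_j\xi_i$ into a coercive $(1-\alpha)$-piece plus $\nabla\varphi$-errors controlled by Lemma \ref{lena13} is the same as the paper's. The only cosmetic difference is that you estimate the cross term by Cauchy--Schwarz in integral form, whereas the paper applies the pointwise Young inequality before integrating; both land on the same bound.
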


\begin{proof}
Fix an arbitrary $\alpha\in (0,1)$ and $x_0\in \Omega$. Introduce
the vector field
\begin{equation}\label{sura4}
    \boldsymbol\xi(x)=\varphi(x)^{3/2-\beta}|x-x_0|^{-\alpha}
    (x-x_0).
\end{equation}
It is easily seen that
\begin{equation*}\begin{split}
\nabla\boldsymbol\xi=\frac{\varphi(x)^{3/2-\beta}}{|x-x_0|^{\alpha}}
\Big(\mathbf I-\frac{\alpha}{|x-x_0|^2} (x-x_0)\otimes (x-x_0) +
\frac{3-2\beta}{2\varphi}\,(x-x_0)\otimes \nabla\varphi\Big).
\end{split}\end{equation*}
Recall that  $\beta\in (0,1/2)$. From this
 we conclude that
$$
|\nabla\boldsymbol\xi(x)|\leq c |x-x_0|^{-\alpha}.
$$
Hence the vector field $\boldsymbol\xi$ belongs to the class
$W^{1,r}_0(\Omega)$ for all $r\in [1,3/\alpha)$. In particular, we
have
$$
\|\boldsymbol\xi\|_{W^{1,2}_0(\Omega)}\leq c, \quad
\|\boldsymbol\xi\|_{L^\infty(\Omega)}\leq c.
$$
Substituting $\boldsymbol\xi $ into integral identity \eqref{anna7}
leads to
\begin{equation*}\begin{split}
     \int_\Omega\big(\varrho\mathbf u\otimes\mathbf u:\nabla\boldsymbol
    \xi+p_\epsilon(\varrho) \text{div~}\boldsymbol\xi\big)dx=
    \int_\Omega
    \big(\mathbb S(\mathbf u):\nabla\boldsymbol\xi+
    \varrho\mathbf f\cdot \boldsymbol\xi\big)\,
    dx\leq\\
    c \|\mathbf
    u\|_{W^{1,2}_0(\Omega)}\|\boldsymbol\xi\|_{W^{1,2}_0(\Omega)}+c
    M\|\boldsymbol\xi\|_{L^\infty(\Omega)},
\end{split}\end{equation*}
which yields the estimate
\begin{equation}\label{sura5}\begin{split}
 \int_\Omega\big(\varrho\mathbf u\otimes\mathbf u:\nabla\boldsymbol
    \xi+p_\epsilon(\varrho) \text{div~}\boldsymbol\xi\big)dx\leq
    c(1+\|\mathbf u\|_{W^{1,2}_0(\Omega)}).
\end{split}\end{equation}
We have
\begin{equation}\begin{split}\label{sura5e}
\text{div~}\boldsymbol\xi=\frac{(3-\alpha)\varphi^{3/2-\beta}}{|x-x_0|^\alpha}
-\frac{(3-2\beta)\varphi^{1/2-\beta}}{2|x-x_0|^{\alpha}}\,
(x-x_0)\cdot
\nabla\varphi\geq\\
\frac{(3-\alpha)\varphi^{3/2-\beta}}{|x-x_0|^\alpha}-c,
\end{split}\end{equation}
and
\begin{equation}\begin{split}\label{sura5ee}
\varrho\mathbf u\otimes\mathbf u:\nabla\boldsymbol
    \xi=\frac{\varphi^{3/2-\beta}\varrho}{|x-x_0|^\alpha}\Big(|\mathbf
    u|^2-\frac{\alpha}{|x-x_0|^2}\big(\mathbf
    u\cdot(x-x_0)\big)^2\Big)+\\
\varrho\frac{(3-2\beta)\varphi^{1/2-\beta}}{2|x-x_0|^{\alpha}}\,
\big((x-x_0)\cdot
\mathbf u)\big(\nabla\varphi\cdot\mathbf u\big)\geq\\
\frac{(1-\alpha)\varphi^{3/2-\beta}}{|x-x_0|^\alpha}\varrho|\mathbf
    u|^2-c\varphi^{1/2-\beta}\varrho|\mathbf u||\big(\nabla\varphi\cdot\mathbf
    u\big)|\geq\\
    \frac{(1-\alpha)\varphi^{3/2-\beta}}{|x-x_0|^\alpha}\varrho|\mathbf
    u|^2-c\varphi^{1-\beta}\varrho|\mathbf u|^2-c
    \varphi^{-\beta}\varrho
\big(\nabla\varphi\cdot\mathbf
    u\big)^2.
\end{split}\end{equation}
Inserting \eqref{sura5e} and \eqref{sura5ee} into  \eqref{sura5} we
arrive at the inequality
\begin{multline*}
 \int_\Omega\frac{\varphi^{3/2-\beta}(p_\epsilon +\varrho|\mathbf
    u|^2)}{|x-x_0|^\alpha}\, dx\leq\\ c\Big(1+\int_\Omega
    p_\epsilon\varphi^{-\beta}\, dx +
\int_\Omega\varphi^{1-\beta}\varrho|\mathbf u|^2\, dx+\int_\Omega
\varphi^{-\beta}\varrho \big(\nabla\varphi\cdot\mathbf
    u\big)^2\, dx+\|\mathbf u\|_{W^{1,2}_0(\Omega)}\Big).
\end{multline*}
From this and \eqref{lena15}-\eqref{lena17} we finally obtain
\begin{equation*}
\int_\Omega\frac{\varphi^{3/2-\beta}(p_\epsilon +\varrho|\mathbf
    u|^2)}{|x-x_0|^\alpha}\, dx\leq
    c(1+A^{(1+\theta)/2}+A^{(1-2\theta)/4})\leq
    c(1+A^{(1+\theta)/2}).
\end{equation*}

\end{proof}
\section{Estimates for $\mathbf u$ and $p_\epsilon$}\label{kira}
\renewcommand{\theequation}{5.\arabic{equation}}
\setcounter{equation}{0}

In this section we establish the a priori estimates for$\|\mathbf
u\|_{W^{1,2}_0(\Omega)}$ and the pressure function. The result is
given by the following
\begin{prop}\label{kuka200}
Let  a solution
  $(\mathbf u, \varrho)\in W^{1,2}_0(\Omega)\times
L^8(\Omega)$ to problem \eqref{inna5} be given by Lemma \ref{anna5}
and   $q$ be given by \eqref{anna100}. Then
\begin{equation}\label{kira101}
\|\mathbf u\|_{W^{1,2}_0(\Omega)}+\|p_\epsilon\|_{L^q(\Omega)}\leq
c_e,
\end{equation}
where the constant $c_e$ is specified by Remark \ref{anna200}.
\end{prop}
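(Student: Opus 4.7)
The goal is to bootstrap the $A$-dependent estimates of Lemma \ref{lena13} into absolute ones. Once $A\le c_e$ is proved, Lemma \ref{lena13} directly yields $\|\mathbf u\|_{W^{1,2}_0(\Omega)}\le c_e$, and the pressure bound $\|p_\epsilon\|_{L^q(\Omega)}\le c_e$ follows from the weighted H\"older interpolation
\begin{equation*}
\int_\Omega p_\epsilon^q\,dx\le\Bigl(\int_\Omega p_\epsilon^s\varphi^{s(1-\beta)}\,dx\Bigr)^{q\beta/s}\Bigl(\int_\Omega p_\epsilon\varphi^{-\beta}\,dx\Bigr)^{q(1-\beta)},
\end{equation*}
which is valid because the identity $q\beta/s+q(1-\beta)=1$ is equivalent to the definition of $q$ in \eqref{anna100}; both right-hand factors are controlled by Lemma \ref{lena13}. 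Thus the whole proof reduces to a self-improving inequality $A\le c_e(1+A^\kappa)$ with some $\kappa<1$.

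To control $A$, I apply Lemma \ref{sura7e} componentwise to $\mathbf u$ with the measure $f:=\varrho|\mathbf u|^{2(1-\theta)}\varphi^{2\beta}$, so that $A=\int_\Omega|\mathbf u|^2 f\,dx$. This yields
\begin{equation*}
A\le cE\,\|\mathbf u\|_{W^{1,2}_0(\Omega)}^2,\qquad E:=\sup_{x_0\in\Omega}\int_\Omega f(x)|x-x_0|^{-1}\,dx,
\end{equation*}
and combined with the bound $\|\mathbf u\|_{W^{1,2}_0(\Omega)}^2\le c_e(1+A^{(1-2\theta)/2})$ from Lemma \ref{lena13}, the entire task reduces to estimating $E$ by a sublinear power of $A$.

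The decisive algebraic identity is that $2\beta\ge(3/2-\beta)(1-\theta)$ holds precisely when $\theta\le 1/8$, which is guaranteed by \eqref{anna101}. Since $\varphi$ is bounded on $\Omega$ this gives the pointwise domination $f\le c(\varrho|\mathbf u|^2\varphi^{3/2-\beta})^{1-\theta}\varrho^\theta$. Applying H\"older with exponents $1/(1-\theta)$ and $1/\theta$ and splitting the kernel as $|x-x_0|^{-1}=|x-x_0|^{-(1-\theta)\sigma_1}\cdot|x-x_0|^{-\theta\sigma_2}$ with $(1-\theta)\sigma_1+\theta\sigma_2=1$, one obtains
\begin{equation*}
\int_\Omega f|x-x_0|^{-1}\,dx\le\Bigl(\int_\Omega\varrho|\mathbf u|^2\varphi^{3/2-\beta}|x-x_0|^{-\sigma_1}\,dx\Bigr)^{1-\theta}\Bigl(\int_\Omega\varrho|x-x_0|^{-\sigma_2}\,dx\Bigr)^{\theta}.
\end{equation*}
Choosing $\sigma_1\in(0,1)$ close to $1$ (so $\sigma_2$ is close to $1$), Lemma \ref{sura2} bounds the first factor by $c_e(1+A^{(1+\theta)/2})^{1-\theta}$. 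The second factor, raised only to the small power $\theta$, is handled by combining the mass constraint $\|\varrho\|_{L^1(\Omega)}=M$, the bound $\|\varrho\|_{L^\gamma(\Omega)}^\gamma\le cB\le c_e(1+A^{(1+\theta)/2})$, and --- for $\gamma$ near $1$, where these alone fail --- the boundary smallness $\int_{\{\varphi<\delta\}}\varrho^\gamma\,dx\le c\delta^\beta$ from $\|\varphi^{-\beta}p_\epsilon\|_{L^1(\Omega)}$ together with the interior higher integrability of $\varrho$ from $\|\varphi^{1-\beta}p_\epsilon\|_{L^s(\Omega)}$.

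The outcome is a bound $E\le c_e(1+A^{(1+\theta)/2})^{\lambda}$ with some $\lambda<1$; inserting into $A\le cE\|\mathbf u\|_{W^{1,2}_0}^2$ and using the exponent identity $(1+\theta)/2+(1-2\theta)/2=1-\theta/2$ produces $A\le c_e(1+A^\kappa)$ with $\kappa<1$, so Young's inequality yields $A\le c_e$. The main technical obstacle is the endpoint mismatch in the preceding paragraph: Lemma \ref{sura2} is valid only for $\sigma_1<1$ while Lemma \ref{sura7e} requires the Newtonian kernel $|x-x_0|^{-1}$, which forces $\sigma_2>1$ in the split, and the integral $\int\varrho|x-x_0|^{-\sigma_2}\,dx$ cannot be controlled by the bare $L^\gamma$-bound on $\varrho$ in the full range $\gamma>1$ --- exactly the point at which both weighted pressure estimates of Lemma \ref{lena13} have to be brought in simultaneously through a near-boundary versus interior decomposition.
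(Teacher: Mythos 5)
Your overall strategy matches the paper's: prove a self-improving inequality $A\le c_e(1+A^{\kappa})$ with $\kappa<1$ via Lemma \ref{sura7e}, then deduce $\|\mathbf u\|_{W^{1,2}_0}\le c_e$ from Lemma \ref{lena13} and get the pressure bound by interpolating between $\|p_\epsilon\varphi^{1-\beta}\|_{L^s}$ and $\|p_\epsilon\varphi^{-\beta}\|_{L^1}$ (your H\"older step is equivalent to the paper's Young step). The gap is entirely in the estimate of $E=\sup_{x_0}\int_\Omega \varrho|\mathbf u|^{2(1-\theta)}\varphi^{2\beta}|x-x_0|^{-1}\,dx$, and you correctly identify it yourself but do not resolve it. Your two-factor decomposition $f\le c(\varrho|\mathbf u|^2\varphi^{3/2-\beta})^{1-\theta}\varrho^{\theta}$ isolates a bare $\varrho^{\theta}$, which then forces you into integrating $\varrho$ against a supercritical kernel $|x-x_0|^{-\sigma_2}$ with $\sigma_2>1$; this integral is not controlled by any of the available bounds uniformly for $\gamma$ near $1$. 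Your proposed near-boundary/interior repair is only gestured at: near a boundary point $x_0$ the factor $|x-x_0|^{-\sigma_2}$ is singular exactly where the density is worst (only $\varrho^\gamma\varphi^{-\beta}\in L^1$), and the interior bound $\varphi^{1-\beta}p_\epsilon\in L^s$ gives $\varrho\in L^{\gamma s}_{\rm loc}$, which is still far below the $L^{3/(3-\sigma_2)}$ needed when $\gamma$ is close to $1$.

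What the paper does instead (Lemma \ref{kira1}) is a \emph{three}-factor split that never leaves a bare density: using the exact identity $2\beta=(3/2-\beta)\bigl(\theta/\gamma+(1-\theta)\bigr)$ (not the inequality $2\beta\ge(3/2-\beta)(1-\theta)$ you used) one writes
\begin{equation*}
\frac{\varrho|\mathbf u|^{2(1-\theta)}\varphi^{2\beta}}{|x-x_0|}
=\Bigl(\frac{\varrho^{\gamma}\varphi^{3/2-\beta}}{|x-x_0|^{\alpha}}\Bigr)^{\theta/\gamma}
\Bigl(\frac{\varrho|\mathbf u|^{2}\varphi^{3/2-\beta}}{|x-x_0|^{\alpha}}\Bigr)^{1-\theta}
\Bigl(\frac{1}{|x-x_0|^{2}}\Bigr)^{\theta-\theta/\gamma},
\end{equation*}
with $\alpha\in(0,1)$ chosen so the kernel exponents match (the powers $\theta/\gamma+(1-\theta)+(\theta-\theta/\gamma)=1$). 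Young's inequality then gives three terms, and crucially \emph{both} density-bearing terms carry the weight $\varphi^{3/2-\beta}$ and a subcritical kernel $|x-x_0|^{-\alpha}$, so both are handled by Lemma \ref{sura2}; the leftover $|x-x_0|^{-2}$ is integrable in $\mathbb{R}^3$ with no density attached. This removes exactly the obstacle you ran into. So the structure of your argument is right, but the key algebraic decomposition that makes the $E$-estimate close is missing, and the replacement you sketch would not work in the full range $\gamma>1$.
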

\begin{proof} The proof is based on the following  technical lemmas.

\begin{lem}\label{kira1}
Let  a solution
  $(\mathbf u, \varrho)\in W^{1,2}_0(\Omega)\times
L^8(\Omega)$ to problem \eqref{inna5} be given by Lemma \ref{anna5}
and  $(\theta,\beta)$    be given by  \eqref{anna100}. Then for
every $x_0\in\Omega$, we have
\begin{equation}\label{sura6}
    \int_\Omega \frac{\varrho(x)|\mathbf
    u|^{2(1-\theta)}(x)\, \varphi(x)^{2\beta}}{ |x-x_0|}\, dx\leq
 c_e(1+A^{(1+\theta)/2}).
\end{equation}

\end{lem}
\begin{proof}
Formulae \eqref{anna100} imply
$$
2\beta=\frac{\theta}{\gamma}\Big(\frac{3}{2}-\beta\Big)
+(1-\theta)\Big(\frac{3}{2}-\beta\Big).
$$
Next we set
$$
\alpha=(1-16\theta)/(1-8\theta)\in (0,1).
$$
 We have
\begin{equation*}
\frac{\varrho|\mathbf
    u|^{2(1-\theta)}\, \varphi^{2\beta}}{ |x-x_0|}=\Big(
    \frac{\varrho^\gamma\varphi^{3/2-\beta}}{|x-x_0|^\alpha}\Big)^{\theta/\gamma}
    \,\,\Big(\frac{\varrho|\mathbf
    u|^2\varphi^{3/2-\beta}}{|x-x_0|^\alpha}\Big)^{1-\theta}\Big(\frac{1}{|x-x_0|^2}
    \Big)^{\theta-\theta/\gamma}.
\end{equation*}
Applying the Young inequality and noting that $\varrho^\gamma\leq c
p_\epsilon(\varrho)$ we arrive at
\begin{equation*}
\frac{\varrho|\mathbf
    u|^{2(1-\theta)}\, \varphi^{2\beta}}{ |x-x_0|}\,\leq\, c
 \frac{p_\epsilon\,\varphi^{3/2-\beta}}{|x-x_0|^\alpha}\,+\,\frac{\varrho|\mathbf
    u|^2\,\varphi^{3/2-\beta}}{|x-x_0|^\alpha}\,+\,\frac{1}{|x-x_0|^2}
\end{equation*}
Integrating both sides of this inequality over $\Omega$ and using
estimate \eqref{sura3}, we obtain \eqref{sura6}.
\end{proof}

\begin{lem}\label{sura7}
Let  a solution
  $(\mathbf u, \varrho)\in W^{1,2}_0(\Omega)\times
L^8(\Omega)$ to problem \eqref{inna5} be given by Lemma \ref{anna5}.
 Then
\begin{equation}\label{sura8}
   A\leq c_e.
\end{equation}
\end{lem}
\begin{proof}
Estimate \eqref{sura6} and  Lemma \ref{sura7e} imply
\begin{equation}\label{sura10}\begin{split}
    A\equiv \int_\Omega |\mathbf u|^2(\varrho |\mathbf
    u|^{2(1-\theta)}\varphi^{2\beta})\,dx \leq\\
c\|\mathbf
u\|_{W^{1,2}_0(\Omega)}^2\sup\limits_{x_0\in\Omega}\int_\Omega
\varrho |\mathbf
    u|^{2(1-\theta)}\varphi^{2\beta}|x-x_0|^{-1}\, dx\leq
    c_e(1+A^{(1+\theta)/2})\|\mathbf
u\|_{W^{1,2}_0(\Omega)}^2 \end{split}\end{equation} From this, the
inequalities $0<\theta<1/8$,  and \eqref{lena16} we get the
inequality
$$
A\leq c_e (1+A^{1/2-\theta})(1+A^{(1+\theta)/2})\leq c_e
(1+A^{1-\theta/2}),
$$
which obviously yields \eqref{sura8}.
\end{proof}
Let us turn to the proof of Proposition \ref{kuka200}. Estimate
\eqref{kira101} for $\|\mathbf u\|_{W^{1,2}_0(\Omega)}$ obviously
follows from Lemmas \ref{lena13} and \ref{sura7}.  It remains to
estimate $\|p_\epsilon\|_{L^q(\Omega)}$. Recall  formula
\eqref{anna100} for $q$. Next, the Young inequality implies
\begin{equation*}\begin{split}
p_\epsilon^q=\big\{\,\big(p_\epsilon\varphi^{1-\beta}\big)^s\,\big\}^{\frac{\beta}{
\beta+(1-\beta)s}}\,\big\{\,p_\epsilon\varphi^{-\beta}\,\big\}^{\frac{(1-\beta)s}{
\beta+(1-\beta)s}}\leq (p_\epsilon\varphi^{1-\beta})^s+
p_\epsilon\varphi^{-\beta}.
\end{split}\end{equation*}
Integrating both sides of this relation over $\Omega$ and applying
estimates \eqref{lena15} and \eqref{sura8}, we obtain the desired
estimate
\begin{equation*}\begin{split}
\int_\Omega p_\epsilon^q\, dx\leq\int_\Omega
\big(p_\epsilon\varphi^{1-\beta}\big)^s\, dx+ \int_\Omega
p_\epsilon\varphi^{-\beta}\, dx\leq c_e (1+A^{(1+\theta)/2})\leq
c_e.
\end{split}\end{equation*}

\end{proof}

\section{Kinetic energy estimate.  Proof of Theorem \ref{inna11}}\label{olga}
\renewcommand{\theequation}{6.\arabic{equation}}
\setcounter{equation}{0}

In this section we establish a priori estimate of the kinetic energy
density,  and by doing so  we complete the proof of Theorem
\ref{inna11}.

\begin{prop}
\label{olga31} Let  a solution
  $(\mathbf u, \varrho)\in W^{1,2}_0(\Omega)\times
L^8(\Omega)$ be a weak  renormalized solution to problem
\eqref{inna5} and $s$ be given by \eqref{anna100}. Then
\begin{equation}\label{olga32}\begin{split}
\|\varrho|\mathbf u|^2\|_{L^s(\Omega)}\leq c_e.
\end{split}\end{equation}
\end{prop}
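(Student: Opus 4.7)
The plan is to obtain $\|\varrho|\mathbf u|^2\|_{L^s(\Omega)}\le c_e$ by the same mechanism that produced $A\le c_e$ in Lemma~\ref{sura7}: apply the Green-potential estimate of Lemma~\ref{sura7e} with $u$ being a Cartesian component of $\mathbf u$ and with $f=\varrho^s|\mathbf u|^{2(s-1)}$. Writing $(\varrho|\mathbf u|^2)^s=|\mathbf u|^2\cdot f$ and invoking $\|\mathbf u\|_{W^{1,2}_0}\le c_e$ from Proposition~\ref{kuka200}, Lemma~\ref{sura7e} yields
\begin{equation*}
\int_\Omega(\varrho|\mathbf u|^2)^s\, dx
\;\le\; c\,E\,\|\mathbf u\|_{W^{1,2}_0}^2\;\le\; c_e\,E,\qquad E:=\sup_{x_0\in\Omega}\int_\Omega\varrho^s|\mathbf u|^{2(s-1)}\,|x-x_0|^{-1}\,dx.
\end{equation*}
The entire proof is thus reduced to showing $E\le c_e$.

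To estimate $E$ I would exploit the smallness of $s-1=2\theta^2$ through the algebraic identity
\begin{equation*}
\varrho^s|\mathbf u|^{2(s-1)}
\;=\;\varrho\,\bigl(\varrho|\mathbf u|^{2(1-\theta)}\varphi^{2\beta}\bigr)^{s-1}\,|\mathbf u|^{4\theta^3}\,\varphi^{-4\beta\theta^2},
\end{equation*}
which is easily verified by matching exponents of $\varrho$, $|\mathbf u|$ and $\varphi$ and which singles out the quantity already controlled in Lemma~\ref{kira1}. Distributing the weight $|x-x_0|^{-1}$ between the two factors and applying H\"older's inequality with conjugate exponents $\bigl(1/(s-1),\,1/(2-s)\bigr)$ separates the integrand into a first factor of the form $\bigl(\varrho|\mathbf u|^{2(1-\theta)}\varphi^{2\beta}|x-x_0|^{-1}\bigr)^{s-1}$, uniformly bounded by $c_e^{s-1}$ via Lemma~\ref{kira1}, and a residual factor in which $\varrho$ appears to a power close to one, $|\mathbf u|$ and $\varphi$ to small perturbative exponents, and $|x-x_0|^{-1}$ to a marginally improvable power. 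The residual factor is then estimated using the pressure bound $\|\varrho\|_{L^{\gamma q}}\le c_e$ and the Sobolev embedding $\|\mathbf u\|_{L^6}\le c_e$ from Proposition~\ref{kuka200}.

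The principal obstacle is to make the bound on the residual factor uniform in $\gamma>1$, since for $\gamma$ close to $1$ the estimate $\varrho\in L^{\gamma q}$ is only slightly stronger than $L^1$ and a naive H\"older argument runs into the barrier $\gamma q>3/2$. The remedy is to split $\Omega$ into a neighbourhood of $\partial\Omega$ and its interior complement; near the boundary one uses the additional control $\|\varphi^{-\beta}p_\epsilon\|_{L^1}\le c_e$ from Lemma~\ref{lena13}, which supplies the $\varphi$-weighted integrability of $\varrho$ precisely where it is needed, whereas in the interior the bounded weight $\varphi\ge k$ makes the residual factor harmless. Because $s-1$ is small, the residual factor is only a small perturbation of the integrable quantities already at our disposal, so this splitting is enough to conclude $E\le c_e$.

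Once $E\le c_e$ is proved, Step~1 yields $\|\varrho|\mathbf u|^2\|_{L^s(\Omega)}\le c_e$; together with Proposition~\ref{kuka200} this establishes inequality \eqref{inna10} and completes the proof of Theorem~\ref{inna11}.
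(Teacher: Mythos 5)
Your first step---applying Lemma~\ref{sura7e} with $f=\varrho^s|\mathbf u|^{2(s-1)}$ to reduce the claim to a bound on $E=\sup_{x_0}\int_\Omega\varrho^s|\mathbf u|^{2(s-1)}|x-x_0|^{-1}\,dx$---is correct, and the algebraic identity you write down does hold. The gap is in the estimate of $E$, and it is not a technical detail but the heart of the matter.

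After your H\"older step with exponents $\bigl(1/(s-1),1/(2-s)\bigr)$, the residual integral is
\[
\int_\Omega \varrho^{1/(2-s)}\,|\mathbf u|^{4\theta^3/(2-s)}\,\varphi^{-4\beta\theta^2/(2-s)}\,|x-x_0|^{-1}\,dx,
\]
with $\varrho$ raised to the power $1/(2-s)>1$. None of the estimates you propose to use is strong enough to bound this uniformly in $x_0$. The available bounds on $\varrho$ prior to Proposition~\ref{olga31} are $\varrho\in L^{\gamma q}$ (with $\gamma q\to1$ as $\gamma\to1$) and $\varrho^\gamma\varphi^{-\beta}\in L^1$; neither gives a $|x-x_0|^{-1}$-weighted bound for $\varrho^{1/(2-s)}$. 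In particular, your claim that ``in the interior the bounded weight $\varphi\ge k$ makes the residual factor harmless'' is false: in the interior you still face $\int\varrho^{1/(2-s)}|x-x_0|^{-1}\,dx$, and a duality argument against $|x-x_0|^{-1}\in L^{3-\delta}_{\rm loc}$ requires roughly $\varrho\in L^{3/2}$, which you do not have for $\gamma$ near $1$. Near the boundary, trying to extract $(\varrho^\gamma\varphi^{-\beta})^a$ by H\"older forces $a=1/(\gamma(2-s))\to1$ as $\gamma\to1$, leaving the factor $|x-x_0|^{-1}$ on the complementary side with exponent $1/(1-a)\to\infty$, which fails.

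The missing ingredient is the momentum equation. The paper proves (Lemmas~\ref{olga16}--\ref{olga28}) the weighted bound $\int_\Omega p_\epsilon(\varrho)^\alpha|x-x_0|^{-1}\,dx\le c\bigl(\|\varrho|\mathbf u|^2\|_{L^1}+1\bigr)$ for $\alpha\in(0,1)$, by testing the momentum equation against the carefully constructed vector fields $\boldsymbol\xi$ (with $\Delta_\pm$, near $\partial\Omega$) and $\boldsymbol\xi_{\rm int}$ (in the interior). Crucially the paper chooses $\alpha=2\gamma^{-1}s/(3-s)<1$, so that $\varrho^{\alpha\gamma}\le cp_\epsilon^\alpha$ gives a power of $\varrho$ strictly less than $1$, matching the structure of those lemmas; the missing power of $\varrho$ is supplied afterwards via H\"older against $|\mathbf u|^6$. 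Moreover, the resulting inequality contains $\|\varrho|\mathbf u|^2\|_{L^1}$ on the right---a quantity that is \emph{not} known to be $\le c_e$ at this stage, since Lemma~\ref{lena13} only controls $\|\varphi^{1-\beta}\varrho|\mathbf u|^2\|_{L^s}$ with a degenerate weight---and the proof must close by a bootstrap (the exponent $(3-s)/(2s)<1$ absorbs it). Your proposal claims to obtain $E\le c_e$ directly, without the new vector fields and without any bootstrap, and as written that cannot work.
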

We  divide  the proof into a sequence of lemmas. Let us consider the
function $\varphi\in C^2(\Omega)$ given by  Definition
\ref{anna105}. In view of this definition,  there is $t>0$ such that
$\varphi(x)$ equals the signed distance function in the annulus
$$
A_{2t}=\{x\in \mathbb R^3:\,\, \text{dist~}(x,\partial\Omega)<2t\}.
$$
Introduce the vector field
\begin{equation*}
    \mathbf n(x)=\nabla\varphi(x), \quad  \mathbf n\in C^1(\bar
    A_{2t}), \quad |\mathbf n(x)|=1.
\end{equation*}
Fix an arbitrary $\alpha\in (0,1)$ and $x_0\in A_t$. Define the
vector field
\begin{equation}\label{olga1}
\boldsymbol\xi(x)=\Big\{\frac{\varphi(x)-\varphi(x_0)}{\Delta_-(x,x_0)^\alpha}+
\frac{\varphi(x)+\varphi(x_0)}{\Delta_+(x,x_0)^\alpha} \Big\}\,
\mathbf n(x),
\end{equation}
where
\begin{equation*}
\Delta_\pm(x,x_0)\,=\,|\varphi(x)\pm\,\varphi(x_0)|+|x-x_0|^2.
\end{equation*}
The following two lemmas, which proofs are given in the appendix,
constitute the basic properties of $\boldsymbol\xi$.
\begin{lem}\label{olga2}
There is a constant $c$, depending only on $\alpha$ and $\Omega$,
such that for every $x,x_0\in A_t$ and for every $\mathbf u\in
\mathbb R^3$,
\begin{subequations}\label{olga3}
\begin{gather}\label{olga3a}
|\boldsymbol\xi(x)|\leq c, \quad |\nabla\boldsymbol\xi(x)|\leq c
\Big(\frac{1}{\Delta_-(x,x_0)^\alpha}+\frac{1}{\Delta_+(x,x_0)^\alpha}+1\Big),\\
\label{olga3c} \frac{\partial\xi_i}{\partial x_j}(x) u_iu_j \geq
\frac{1-\alpha}{2}
\Big(\frac{1}{\Delta_-(x,x_0)^\alpha}+\frac{1}{\Delta_+(x,x_0)^\alpha}\Big)|\mathbf
u\cdot\mathbf n(x)|^2-c |\mathbf u|^2,\\
 \label{olga3b}
\text{\rm~div~}\boldsymbol\xi\geq \frac{1-\alpha}{2}
\Big(\frac{1}{\Delta_-(x,x_0)^\alpha}+\frac{1}{\Delta_+(x,x_0)^\alpha}\Big)-c.
\end{gather}
\end{subequations}
\end{lem}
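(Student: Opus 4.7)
Write $\boldsymbol\xi=F\,\mathbf n$ with $F=F_-+F_+$, $F_\pm(x)=\psi_\pm(x)/\Delta_\pm(x,x_0)^\alpha$ and $\psi_\pm(x):=\varphi(x)\pm\varphi(x_0)$, so that $\Delta_\pm=|\psi_\pm|+|x-x_0|^2$. Two elementary inequalities will drive everything: $|\psi_\pm|\le\Delta_\pm$ and $|x-x_0|^2\le\Delta_\pm$. The first immediately yields $|F_\pm|\le\Delta_\pm^{1-\alpha}\le c$ on $A_t$, hence $|\boldsymbol\xi|\le c$. Using $\varphi=d\in C^2(\overline{A_{2t}})$, $\nabla\varphi=\mathbf n$, $|\mathbf n|=1$, I obtain $\nabla\psi_\pm=\mathbf n$ and $\nabla\Delta_\pm=\mathrm{sgn}(\psi_\pm)\,\mathbf n+2(x-x_0)$ a.e., and therefore
\begin{equation*}
\nabla F_\pm=\frac{\mathbf n}{\Delta_\pm^\alpha}\Big(1-\alpha\frac{|\psi_\pm|}{\Delta_\pm}\Big)-\frac{2\alpha\,\psi_\pm(x-x_0)}{\Delta_\pm^{\alpha+1}},
\end{equation*}
in which the scalar factor of the first summand equals $(1-\alpha)+\alpha|x-x_0|^2/\Delta_\pm\ge 1-\alpha$.

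The crucial point is that the cross term is sub-critical. AM--GM on $|\psi_\pm|+|x-x_0|^2=\Delta_\pm$ gives $|\psi_\pm|\,|x-x_0|^2\le\Delta_\pm^2/4$, whence $|\psi_\pm|\,|x-x_0|\le\Delta_\pm^{3/2}/2$ and
\begin{equation*}
\frac{|\psi_\pm|\,|x-x_0|}{\Delta_\pm^{\alpha+1}}\le\tfrac12\,\Delta_\pm^{1/2-\alpha}.
\end{equation*}
For $\alpha\le 1/2$ the right side is already bounded; for $\alpha>1/2$ the Young interpolation $(\Delta_\pm^{-\alpha})^{(2\alpha-1)/(2\alpha)}\le\varepsilon\Delta_\pm^{-\alpha}+C_\varepsilon$ yields $|\psi_\pm|\,|x-x_0|/\Delta_\pm^{\alpha+1}\le\varepsilon/\Delta_\pm^\alpha+C_\varepsilon$ for every $\varepsilon>0$. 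Dotting the gradient formula with $\mathbf n$ and taking $\varepsilon$ so small that $2\alpha\varepsilon\le(1-\alpha)/4$ produces $\nabla F_\pm\cdot\mathbf n\ge 3(1-\alpha)/(4\Delta_\pm^\alpha)-C$. Summing $\pm$ and adding the bounded quantity $F\,\mathrm{div}\,\mathbf n=F\Delta\varphi$ proves \eqref{olga3b} (with a $(1-\alpha)/4$ buffer to be reinvested later). The upper bound in \eqref{olga3a} follows from the same formula, since $|\nabla F_\pm|\le 1/\Delta_\pm^\alpha+c\Delta_\pm^{1/2-\alpha}\le c(1/\Delta_\pm^\alpha+1)$ and $|F||\nabla\mathbf n|\le c$.

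For \eqref{olga3c} I decompose $\partial_j\xi_i u_i u_j=(\mathbf u\cdot\nabla F)(\mathbf u\cdot\mathbf n)+F\,\mathbf u^{\top}(\nabla^2\varphi)\mathbf u$, the second summand bounded by $c|\mathbf u|^2$. Splitting $\nabla F_\pm=(\nabla F_\pm\cdot\mathbf n)\mathbf n+(\nabla F_\pm)^\perp$, the normal component provides the desired $\frac{1-\alpha}{2}(\mathbf u\cdot\mathbf n)^2\Delta_\pm^{-\alpha}$ term (with the buffer from the preceding step). The tangential component inherits $|(\nabla F_\pm)^\perp|\le 2\alpha|\psi_\pm|\,|x-x_0|/\Delta_\pm^{\alpha+1}\le c\Delta_\pm^{1/2-\alpha}$, and the weighted Cauchy--Schwarz
\begin{equation*}
\bigl|((\nabla F_\pm)^\perp\cdot\mathbf u)(\mathbf n\cdot\mathbf u)\bigr|\le\frac{c_0(\mathbf n\cdot\mathbf u)^2}{\Delta_\pm^\alpha}+\frac{\Delta_\pm^\alpha|(\nabla F_\pm)^\perp|^2|\mathbf u|^2}{4c_0}\le\frac{c_0(\mathbf n\cdot\mathbf u)^2}{\Delta_\pm^\alpha}+c\Delta_\pm^{1-\alpha}|\mathbf u|^2
\end{equation*}
absorbs it into the leading term once $c_0=(1-\alpha)/4$, the factor $\Delta_\pm^{1-\alpha}$ being bounded since $\alpha<1$. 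The only real obstacle throughout is that the cross term $\psi_\pm(x-x_0)/\Delta_\pm^{\alpha+1}$ is a priori of the same order $\Delta_\pm^{-\alpha}$ as the main term; the AM--GM identity $|\psi_\pm|\,|x-x_0|\le\Delta_\pm^{3/2}/2$ makes it strictly sub-critical and lets both $\mathrm{div}\,\boldsymbol\xi$ and the quadratic form $\partial_j\xi_iu_iu_j$ inherit the coercive factor $(1-\alpha)/(2\Delta_\pm^\alpha)$.
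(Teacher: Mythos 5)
Your proposal is correct and follows essentially the same route as the paper. Writing $\boldsymbol\xi = F\mathbf n$ and computing $\nabla F_\pm$ is exactly what underlies the paper's decomposition $\partial_j\xi_i = P\,n_in_j + Q_j n_i + R\,\partial_j n_i$ (your scalar $(1-\alpha|\psi_\pm|/\Delta_\pm)/\Delta_\pm^\alpha$ summed over $\pm$ is the paper's $P$, the $(x-x_0)$-term is $\mathbf Q$, and $F$ is $R$). The two small deviations are cosmetic: you prove \eqref{olga3b} directly and then treat \eqref{olga3c}, whereas the paper proves \eqref{olga3c} first and deduces \eqref{olga3b} by summing over $\mathbf u=\mathbf e_k$; and to tame the cross term you first apply the AM--GM bound $|\psi_\pm|\,|x-x_0|\le\tfrac12\Delta_\pm^{3/2}$ (giving subcritical decay $\Delta_\pm^{1/2-\alpha}$) and then absorb, while the paper keeps $|\psi_\pm|$ and $|x-x_0|$ separate and uses an asymmetric weighted Cauchy--Schwarz (pairing $|x-x_0|^2/\Delta_\pm^{1+\alpha}\le\Delta_\pm^{-\alpha}$ with $(\mathbf n\cdot\mathbf u)^2$ and $|\psi_\pm|^2/\Delta_\pm^{1+\alpha}\le|\psi_\pm|^{1-\alpha}\le c$ with $|\mathbf u|^2$). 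Both bookkeepings land on the same coercive block with factor $\frac{1-\alpha}{2}$, so the argument is sound.
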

\begin{proof} The proof is in the Appendix.
\end{proof}
\begin{lem}\label{olga10}
Let $\Omega_t=\Omega\cap A_t$. Then there is a constant $c$,
depending only on $\alpha$ and $\Omega$ such that
\begin{equation}\label{olga11}
    \|\nabla\boldsymbol\xi\|_{L^2(\Omega_t)}\leq c\text{~~for every~~}
    x_0\in \Omega_t.
\end{equation}
\end{lem}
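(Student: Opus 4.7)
The plan is to reduce the inequality to uniform bounds on two weighted singular integrals and then evaluate them in boundary-adapted coordinates. From the pointwise gradient bound \eqref{olga3a} of Lemma \ref{olga2} one obtains
$$|\nabla\boldsymbol\xi(x)|^2\leq c\bigl(\Delta_-(x,x_0)^{-2\alpha}+\Delta_+(x,x_0)^{-2\alpha}+1\bigr),$$
so, since $|\Omega_t|\leq|\Omega|$, it is enough to prove that
$$I_\pm(x_0):=\int_{\Omega_t}\Delta_\pm(x,x_0)^{-2\alpha}\,dx\leq c\qquad\text{uniformly for}\ x_0\in\Omega_t.$$

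Since $\varphi=d$ on $A_{2t}$ with $|\nabla\varphi|=1$ and $\partial\Omega\in C^2$, I would cover $\partial\Omega$ by finitely many $C^2$ charts and, in each chart, flatten the boundary and use the coordinates $(y,z)\in(0,2t)\times B$ given by $y=\varphi(x)$ and $z$ the projection on $\partial\Omega$ (with $B$ a bounded planar region). The Jacobian of this map is bounded above and below, and $|x-x_0|^2$ is comparable to $(y-y_0)^2+|z-z_0|^2$. Since $y,y_0\in(0,2t)$, the quadratic term is absorbed by $|y-y_0|$, so
$$\Delta_-(x,x_0)\asymp|y-y_0|+|z-z_0|^2,\qquad\Delta_+(x,x_0)\asymp(y+y_0)+|z-z_0|^2.$$
The part of $\Omega_t$ lying in charts far from $x_0$ contributes only a harmless bounded piece to $I_\pm$ because $\Delta_\pm\geq|x-x_0|^2$ is bounded below there.

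To handle a single local chart I would pass to polar coordinates in $z-z_0\in\mathbb R^2$ and substitute $u=|z-z_0|^2$, reducing both integrals to one-dimensional Abel-type integrals $\int_0^{R^2}(s+u)^{-2\alpha}\,du$ with $s=|y-y_0|$ for $I_-$ and $s=y+y_0$ for $I_+$. The inner integration produces a term of order $s^{1-2\alpha}$ (plus a bounded remainder) when $2\alpha>1$, and the subsequent integration in $y\in(0,2t)$ converges because the hypothesis $\alpha<1$ yields $2-2\alpha>0$. For $I_+$ I use the monotonicity $((y+y_0)+u)^{-2\alpha}\leq(y+u)^{-2\alpha}$ to reduce to the case $y_0=0$, obtaining a bound independent of $x_0$; for $I_-$ the bound is automatic after the translation $y\mapsto y-y_0$. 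The main technical point is exactly this uniformity in $x_0\in\Omega_t$ as $x_0$ approaches $\partial\Omega$, and it is secured by the monotonicity/translation observations above together with the constraint $\alpha<1$.
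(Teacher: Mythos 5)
Your proof is correct and reaches the same model computation as the paper, but by a slightly different route. The paper first observes that $\Delta_-(x,x_0)\le\Delta_+(x,x_0)$ on $\Omega_t$, so only the $\Delta_-^{-2\alpha}$ integral needs to be controlled; it then splits into $B_t=\{|x-x_0|<t\}$ and its complement, and on $B_t$ uses a second-order Taylor expansion of $\varphi$ at $x_0$ to get the pointwise lower bound
\begin{equation*}
\Delta_-(x,x_0)\ \ge\ c\bigl(|\mathbf n_0\cdot(x-x_0)|+|P_0(x-x_0)|^2\bigr),\qquad P_0=\mathbb I-\mathbf n_0\otimes\mathbf n_0,
\end{equation*}
after which it evaluates the integral in an orthonormal frame with $\mathbf b_3=\mathbf n_0$, using polar coordinates in $(y_1,y_2)$ and the substitution $v=r^2$. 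You instead control $I_-$ and $I_+$ separately, flattening the boundary with a finite atlas of $C^2$ charts to get $\Delta_\pm\asymp|y\mp y_0|_{\pm}+|z-z_0|^2$ (with $|y+y_0|$ for $\Delta_+$), and you use the monotonicity $((y+y_0)+u)^{-2\alpha}\le(y+u)^{-2\alpha}$ to obtain uniformity in $y_0$ for $I_+$. Both approaches reduce to the same one-dimensional Abel-type integral and exploit the same convergence condition $\alpha<1$; the only economy you forgo is the paper's inequality $\Delta_-\le\Delta_+$, which renders the $I_+$ estimate unnecessary. The chart-based reduction and the Taylor-at-$x_0$ reduction are interchangeable here, since both rely on the same $C^2$ tubular structure of $\partial\Omega$ and the comparability $|x-x_0|^2\asymp(y-y_0)^2+|z-z_0|^2$.
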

\begin{proof} The proof is in the Appendix.
\end{proof}

The next lemma gives the weighted pressure estimate near $\partial
\Omega$.
\begin{lem}\label{olga16}
Let  a solution
  $(\mathbf u, \varrho)\in W^{1,2}_0(\Omega)\times
L^8(\Omega)$ to problem \eqref{inna5} be given by Lemma \ref{anna5}.
Let  $\alpha\in (0,1)$ and  $x_0\in \Omega$. Furthermore, assume
that $\zeta\in C^\infty(\bar\Omega)$ satisfies the conditions
\begin{equation}\label{olga17}
    \zeta \geq 0\text{~~in~~}\Omega, \quad
    \zeta=0\text{~~in~~}\Omega\setminus \Omega_{t/2}.
\end{equation}
Then
\begin{equation}\label{olga18}\begin{split}
    \int_\Omega\frac{\zeta p_\epsilon(\varrho)(x)}{|x-x_0|^\alpha}\, dx\leq c
\big(\|p_\epsilon(\varrho)\|_{L^1(\Omega)}+\|\varrho|\mathbf
u|^2\|_{L^1(\Omega)}+\|\mathbf u\|_{W^{1,2}_0(\Omega)}+1\big),
\end{split}\end{equation}
where $c$ depends on  $c_e$, $\alpha$, and
 $\zeta$.
\end{lem}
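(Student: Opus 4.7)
The plan is to substitute the weighted test function $\zeta\boldsymbol\xi$, with $\boldsymbol\xi$ given by \eqref{olga1}, into the momentum identity \eqref{anna7}, extract the singular pressure integral from the divergence lower bound \eqref{olga3b}, and bound all remaining terms by the three quantities on the right-hand side of \eqref{olga18}. First I would check admissibility: $\boldsymbol\xi$ is defined and smooth on $A_t$, and by \eqref{olga3a} it is bounded there, while $\zeta$ is $C^\infty$ with support in $\Omega_{t/2}\subset A_t$, so extending $\zeta\boldsymbol\xi$ by zero outside the support of $\zeta$ gives an element of $W^{1,2}_0(\Omega)$ whose gradient is controlled in $L^2(\Omega)$ by Lemma \ref{olga10} together with the product rule.

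Before proceeding, a short case distinction is convenient. If $x_0\notin A_t$, then $|x-x_0|\geq t/2$ on the support of $\zeta$, and the estimate \eqref{olga18} follows trivially from $\|p_\epsilon\|_{L^1(\Omega)}$. So I may assume $x_0\in A_t$, which is exactly the hypothesis of Lemma \ref{olga2}. Next I record the pointwise inequality
\begin{equation*}
\Delta_-(x,x_0)=|\varphi(x)-\varphi(x_0)|+|x-x_0|^2\leq C|x-x_0|,
\end{equation*}
which holds because $\varphi$ is Lipschitz on $\bar\Omega$ and $|x-x_0|$ is bounded by $\mathrm{diam}(\Omega)$. This gives $\Delta_-(x,x_0)^{-\alpha}\geq c|x-x_0|^{-\alpha}$, which is the mechanism that produces the singular kernel in \eqref{olga18} from \eqref{olga3b}.

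Now I substitute $\zeta\boldsymbol\xi$ into \eqref{anna7} and expand $\nabla(\zeta\boldsymbol\xi)=\zeta\nabla\boldsymbol\xi+\boldsymbol\xi\otimes\nabla\zeta$, $\mathrm{div}(\zeta\boldsymbol\xi)=\zeta\,\mathrm{div}\boldsymbol\xi+\boldsymbol\xi\cdot\nabla\zeta$. From \eqref{olga3b} and the kernel bound above,
\begin{equation*}
\int_\Omega\zeta p_\epsilon\,\mathrm{div}\boldsymbol\xi\,dx\geq c'\int_\Omega\frac{\zeta p_\epsilon}{|x-x_0|^\alpha}\,dx-c\int_\Omega\zeta p_\epsilon\,dx.
\end{equation*}
The convective contribution splits into $\int\zeta\varrho u_iu_j\partial_j\xi_i\,dx+\int\varrho(\mathbf u\cdot\nabla\zeta)(\mathbf u\cdot\boldsymbol\xi)\,dx$; the first integral is bounded below by \eqref{olga3c} (the positive singular part can be discarded, as I only need an upper bound on the pressure), while the second is controlled by $c\|\varrho|\mathbf u|^2\|_{L^1(\Omega)}$ using $|\boldsymbol\xi|\leq c$. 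The viscous term is estimated by Cauchy--Schwarz as $\|\mathbb S(\mathbf u)\|_{L^2(\Omega)}\|\nabla(\zeta\boldsymbol\xi)\|_{L^2(\Omega)}\leq c\|\mathbf u\|_{W^{1,2}_0(\Omega)}$, the key input being Lemma \ref{olga10}. The lower-order pressure and force terms, involving $\boldsymbol\xi\cdot\nabla\zeta$ and $\varrho\mathbf f\cdot\zeta\boldsymbol\xi$, are bounded by $c\|p_\epsilon\|_{L^1(\Omega)}$ and $c\|\mathbf f\|_{L^\infty(\Omega)}M$, respectively, again using $|\boldsymbol\xi|\leq c$.

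Rearranging the resulting identity yields \eqref{olga18}. The main technical point is the choice of test field $\boldsymbol\xi$ defined via both $\Delta_-$ and $\Delta_+$; the two terms together produce the singular positive contribution to $\mathrm{div}\boldsymbol\xi$ and to $\partial_j\xi_i u_iu_j$ asserted in Lemma \ref{olga2}, while maintaining the global bounds of Lemma \ref{olga10}. Given these two lemmas, the rest of the argument is a bookkeeping exercise in pairing the singular lower bound on the pressure side with merely integrable (non-singular) upper bounds on the convective, viscous, and force sides.
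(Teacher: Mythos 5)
Your proposal is correct and follows essentially the same route as the paper: test the momentum identity against $\zeta\boldsymbol\xi$, use Lemma \ref{olga10} for admissibility, use the lower bounds in Lemma \ref{olga2} to extract the singular pressure integral and to discard the favorable part of the convective term, apply the kernel bound $\Delta_-(x,x_0)\le c|x-x_0|$, and dispatch the remaining terms; the case $x_0\notin A_t$ is handled exactly as in the paper via $|x-x_0|\ge t/2$ on $\operatorname{spt}\zeta$.
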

\begin{proof} We first consider the case of  $x_0\in \Omega_t$. In this case,
 the vector field $\boldsymbol\xi$
  meets all requirements of
Lemmas \ref{olga2} and \ref{olga10}. It follows from Lemma
\ref{olga10} that $\zeta\boldsymbol \xi \in W^{1,2}_0(\Omega)$.
Integral identity \eqref{inna6} with $\boldsymbol\xi$ replaced by
$\zeta\boldsymbol \xi $ implies
\begin{equation}\label{olga19}\begin{split}
    \int_\Omega\big(\zeta\varrho\mathbf u\otimes\mathbf u:\nabla\boldsymbol
    \xi+\zeta p_\epsilon(\varrho) \text{div~}\boldsymbol\xi\big)\, dx=\\
    \int_\Omega\big(\mathbb
    S(\mathbf u):\nabla(\zeta\boldsymbol\xi)-
    \varrho\mathbf f\cdot \zeta\boldsymbol\xi\big)\,
    dx- \int_\Omega\big(\varrho (\nabla\zeta\cdot\mathbf u)
    (\mathbf u\cdot\boldsymbol
    \xi)+ p_\epsilon(\varrho) \nabla\zeta\cdot\boldsymbol\xi\big)\,
    dx.
\end{split}\end{equation}
Notice that $|\boldsymbol\xi|\leq 4(\text{diam~}\Omega)^{1-\alpha}$.
From this and Lemma \ref{olga10} we conclude that
$\|\zeta\boldsymbol\xi\|_{W^{1,2}_0(\Omega)}\leq c$. It follows that
$$
\int_\Omega\big(\mathbb
    S(\mathbf u):\nabla(\zeta\boldsymbol\xi)-
    \varrho\mathbf f\cdot \zeta\boldsymbol\xi\big)\,
    dx\leq c
\big(\|\mathbf u\|_{W^{1,2}_0(\Omega)}+1\big)
$$
and
$$
\int_\Omega\big(\varrho (\nabla\zeta\cdot\mathbf u)
    (\mathbf u\cdot\boldsymbol
    \xi)+ p_\epsilon(\varrho) \nabla\zeta\cdot\boldsymbol\xi\big)\, dx
\leq c\big(\|p_\epsilon(\varrho)\|_{L^1(\Omega)}+\|\varrho|\mathbf
u|^2\|_{L^1(\Omega)}\big).
$$
Combining these results with \eqref{olga19} we arrive at the
estimate
\begin{equation}\label{olga20}\begin{split}
    \int_\Omega\big(\zeta\varrho\mathbf u\otimes\mathbf u:\nabla\boldsymbol
    \xi+\zeta p_\epsilon(\varrho) \text{div~}\boldsymbol\xi\big)\, dx\leq\\
 c\big(\|p_\epsilon(\varrho)\|_{L^1(\Omega)}+\|\varrho|\mathbf
u|^2\|_{L^1(\Omega)}+\|\mathbf u\|_{W^{1,2}_0(\Omega)}+1\big)
\text{~~for all~~}x_0\in A_t .
\end{split}\end{equation}
Next, Lemma \ref{olga2} implies that
$$
\varrho\mathbf u\otimes\mathbf u:\nabla\boldsymbol
    \xi=\varrho\frac{\partial\xi_i}{\partial x_j}u_iu_j\geq
\frac{1-\alpha}{2}
\Big(\frac{1}{\Delta_-(x,x_0)^\alpha}+\frac{1}{\Delta_+(x,x_0)^\alpha}\Big)
\varrho|\mathbf u\cdot\mathbf n(x)|^2-c \varrho|\mathbf u|^2
$$
in $A_t$. From this, \eqref{olga3b}, and \eqref{olga20} we obtain
\begin{equation}\label{olga21}\begin{split}
    \int_\Omega \zeta p_\epsilon(\varrho)
    \Big(\frac{1}{\Delta_-(x,x_0)^\alpha}+\frac{1}{\Delta_+(x,x_0)^\alpha}\Big)
    \, dx\leq\\
 c\big(\|p_\epsilon(\varrho)\|_{L^1(\Omega)}+\|\varrho|\mathbf
u|^2\|_{L^1(\Omega)}+\|\mathbf u\|_{W^{1,2}_0(\Omega)}+1\big)
\text{~~for all~~}x_0\in A_t.
\end{split}\end{equation}
 Next,
the inequality $|\varphi(x)-\varphi(x_0)|\leq |x-x_0|$ yields $
\Delta_-(x,x_0)\leq c|x-x_0|$, where $c>0$ depends only on $\Omega$.
Combining this result with \eqref{olga21} we arrive at the estimate
\begin{equation}\label{olga22}\begin{split}
    \int_\Omega \zeta p_\epsilon(\varrho)|x-x_0|^{-\alpha}\, dx
  \leq\\
 c\big(\|p_\epsilon(\varrho)\|_{L^1(\Omega)}+\|\varrho|\mathbf
u|^2\|_{L^1(\Omega)}+\|\mathbf u\|_{W^{1,2}_0(\Omega)}\big)
\text{~~for all~~}x_0\in A_t.
\end{split}\end{equation}
Let us consider the case of $x_0\in \Omega\setminus A_{t}$. Since
$\zeta$ vanishes in $\Omega\setminus A_{t/2}$, the inequality
$2|x-x_0|\geq t$ holds for all  $x\in \text{spt~}\zeta$ and $x_0\in
\Omega\setminus A_t$, and hence
\begin{equation*}\begin{split}
    \int_\Omega \zeta p_\epsilon(\varrho)|x-x_0|^{-\alpha}\, dx
  \leq
 c\|p_\epsilon(\varrho)\|_{L^1(\Omega)}
\text{~~for all~~}x_0\in \Omega\setminus A_t.
\end{split}\end{equation*}
Combining this inequality with \eqref{olga22} we obtain the desired
estimate \eqref{olga18}.

\end{proof}

\begin{lem}\label{olga23}
Let  a solution
  $(\mathbf u, \varrho)\in W^{1,2}_0(\Omega)\times
L^8(\Omega)$ to problem \eqref{inna5} be given by Lemma \ref{anna5}.
 Then for every nonnegative function $\eta\in C^\infty_0(\Omega)$
and every $x_0\in\Omega$,
\begin{equation}\label{olga24}\begin{split}
    \int_\Omega\frac{\eta p_\epsilon(\varrho)(x)}{|x-x_0|}\, dx\leq c
\big(\|p_\epsilon(\varrho)\|_{L^1(\Omega)}+\|\varrho|\mathbf
u|^2\|_{L^1(\Omega)}+\|\mathbf u\|_{W^{1,2}_0(\Omega)}+1\big),
\end{split}\end{equation}
where $c$ depends only on $\eta$ and $c_e$.
\end{lem}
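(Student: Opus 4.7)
The plan is to establish \eqref{olga24} by the same test-function mechanism as Lemma \ref{olga16}, but taking advantage of the fact that $\eta$ is compactly supported inside $\Omega$. This allows the boundary-adapted vector field from Lemma \ref{olga2} to be replaced by the simple radial choice
\[
\boldsymbol\xi(x)\,=\,\eta(x)\,\frac{x-x_0}{|x-x_0|},\qquad x\in\Omega,
\]
which gives the sharp weight $1/|x-x_0|$ (i.e.\ the endpoint $\alpha=1$) that was excluded in Lemma \ref{olga16}.

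First I would verify that $\boldsymbol\xi\in W^{1,2}_0(\Omega)$ with norm controlled uniformly in $x_0\in\Omega$. The pointwise bounds $|\boldsymbol\xi|\le \|\eta\|_\infty$ and $|\nabla\boldsymbol\xi(x)|\le c(|\nabla\eta(x)|+\eta(x)|x-x_0|^{-1})$, combined with the fact that $|x-x_0|^{-2}$ is locally integrable in $\mathbb R^3$ and $\mathrm{supp}\,\eta\Subset\Omega$, yield
\[
\|\boldsymbol\xi\|_{L^\infty(\Omega)}+\|\nabla\boldsymbol\xi\|_{L^2(\Omega)}\le c,
\]
with $c$ depending only on $\eta$ and $\Omega$. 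Substituting into identity \eqref{anna7} gives
\[
\int_\Omega p_\epsilon(\varrho)\,\mathrm{div}\,\boldsymbol\xi\,dx\;+\;\int_\Omega \varrho\,\mathbf u\otimes\mathbf u:\nabla\boldsymbol\xi\,dx\;=\;\int_\Omega\bigl(\mathbb S(\mathbf u):\nabla\boldsymbol\xi-\varrho\mathbf f\cdot\boldsymbol\xi\bigr)\,dx,
\]
and the right-hand side is bounded by $c(1+\|\mathbf u\|_{W^{1,2}_0(\Omega)})$ via $\|\mathbf f\|_{L^\infty}$, $\|\varrho\|_{L^1(\Omega)}=M$, and the above bounds on $\boldsymbol\xi$.

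For the left-hand side I use the identity $\mathrm{div}\bigl((x-x_0)/|x-x_0|\bigr)=2/|x-x_0|$ in $\mathbb R^3$ to split
\[
p_\epsilon\,\mathrm{div}\,\boldsymbol\xi\;=\;\frac{2\,\eta\,p_\epsilon(\varrho)}{|x-x_0|}\;+\;p_\epsilon(\varrho)\,\frac{(x-x_0)\cdot\nabla\eta}{|x-x_0|},
\]
and a direct computation gives
\[
\varrho\,u_iu_j\,\partial_j\xi_i\;=\;\frac{\eta\,\varrho}{|x-x_0|}\Bigl(|\mathbf u|^2-\frac{(\mathbf u\cdot(x-x_0))^2}{|x-x_0|^2}\Bigr)\;+\;\frac{\varrho\,(\mathbf u\cdot\nabla\eta)(\mathbf u\cdot(x-x_0))}{|x-x_0|}.
\]
The crucial sign observation is that the bracket in the first term is nonnegative by Cauchy--Schwarz, so the singular piece of the convective term has a favourable sign and can be discarded. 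The remaining pieces are bounded pointwise by $c\,|\nabla\eta|(p_\epsilon+\varrho|\mathbf u|^2)$, hence contribute at most $c(\|p_\epsilon\|_{L^1(\Omega)}+\|\varrho|\mathbf u|^2\|_{L^1(\Omega)})$ to the estimate, with $c$ depending on $\eta$ only.

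Putting these ingredients together yields \eqref{olga24}. The only point requiring care is the $x_0$-uniform $W^{1,2}_0$ bound on $\boldsymbol\xi$, which relies exclusively on the dimensional inequality $3>2$; the favourable sign of the convective singular term via Cauchy--Schwarz is the second, essentially cosmetic, observation. No Bogovskii construction or boundary-adapted field is required here, which is what makes the interior setting simpler than the boundary case treated in Lemma \ref{olga16}.
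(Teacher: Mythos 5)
Your proof is correct and is essentially the same as the paper's: both use the radial vector field $(x-x_0)/|x-x_0|$ localized by $\eta$, the $x_0$-uniform $W^{1,2}_0$ bound, the identity $\mathrm{div}\bigl((x-x_0)/|x-x_0|\bigr)=2/|x-x_0|$, and the Cauchy--Schwarz positivity of the convective singular term. The paper merely names the unlocalized field $\boldsymbol\xi_{\rm int}$ and tests with $\eta\boldsymbol\xi_{\rm int}$, so the $\nabla\eta$ terms appear already separated in the integral identity rather than emerging from expanding $\nabla(\eta\boldsymbol\xi_{\rm int})$ --- a purely presentational difference.
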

\begin{proof}Fix an arbitrary $x_0\in \Omega$ and introduce the vector
field
$$
\boldsymbol\xi_{\rm int}(x)=|x-x_0|^{-1} (x-x_0).
$$
Obviously $|\nabla\boldsymbol\xi_{\rm int}(x)|\leq c |x-x_0|^{-1}$.
Hence $\eta\boldsymbol\xi_{\rm int}\in W^{1,2}_0(\Omega)$ and
\begin{equation}\label{olga25}
    \|\eta \boldsymbol\xi_{\rm int}\|_{W^{1,2}_0(\Omega)}\leq c.
\end{equation}
Integral identity \eqref{inna6} with $\boldsymbol\xi$ replaced by
$\eta\boldsymbol \xi_{\rm int} $ implies
\begin{equation*}\begin{split}
    \int_\Omega\big(\eta\varrho\mathbf u\otimes\mathbf u:\nabla\boldsymbol
    \xi_{\rm int}+\eta p_\epsilon(\varrho) \text{div~}\boldsymbol\xi_{\rm int}\big)\, dx=\\
    \int_\Omega\big(\mathbb
    S(\mathbf u):\nabla(\eta\boldsymbol\xi_{\rm int})-
    \varrho\mathbf f\cdot \eta\boldsymbol\xi_{\rm int}\big)\,
    dx- \int_\Omega\big(\varrho (\nabla\eta\cdot\mathbf u)
    (\mathbf u\cdot\boldsymbol
    \xi_{\rm int})+ p_\epsilon(\varrho)
    \nabla\eta\cdot\boldsymbol\xi_{\rm int}\big)\, dx
\end{split}\end{equation*}

 From this, \eqref{olga25} and the obvious equality $|
\boldsymbol\xi_{\rm int}|=1$ we obtain
\begin{equation}\label{olga27}\begin{split}
    \int_\Omega\big(\eta\varrho\mathbf u\otimes\mathbf u:\nabla\boldsymbol
    \xi_{\rm int}+\eta p_\epsilon(\varrho) \text{div~}\boldsymbol\xi_{\rm int}\big)\, dx\leq\\
 c\big(\|p_\epsilon(\varrho)\|_{L^1(\Omega)}+\|\varrho|\mathbf
u|^2\|_{L^1(\Omega)}+\|\mathbf u\|_{W^{1,2}_0(\Omega)}+1\big)
\end{split}\end{equation}
Straightforward calculations give
\begin{equation*}
\mathbf u\otimes\mathbf u:\nabla\boldsymbol
    \xi_{\rm int}=|x-x_0|^{-1}(\mathbb I-\boldsymbol
    \xi_{\rm int}\otimes\boldsymbol
    \xi_{\rm int})\mathbf u\cdot\mathbf u\geq 0,
\end{equation*}
and
\begin{equation*}
\text{div~}\boldsymbol\xi_{\rm int}=2|x-x_0|^{-1}.
\end{equation*}
Combining these results with \eqref{olga27} we obtain
\eqref{olga24}.
\end{proof}
\begin{lem}\label{olga28}
Let  a solution
  $(\mathbf u, \varrho)\in W^{1,2}_0(\Omega)\times
L^8(\Omega)$ to problem \eqref{inna5} be given by Lemma \ref{anna5}.
Let  $\alpha\in (0,1)$. Then for every  $x_0\in \Omega$,
\begin{equation}\label{olga29}\begin{split}
    \int_\Omega\frac{ p_\epsilon(\varrho)^\alpha}{|x-x_0|}\,dx\leq c
\big(\|\varrho|\mathbf u|^2\|_{L^1(\Omega)}+1\big),
\end{split}\end{equation}
where $c$ depends only  on  $c_e$ and $\alpha$.
\end{lem}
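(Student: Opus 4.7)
The plan is to decompose $\Omega$ into an interior piece and a boundary strip, apply Lemma \ref{olga23} on the former and Lemma \ref{olga16} on the latter, and then upgrade each of these linear-in-$p_\epsilon$ bounds to the nonlinear estimate involving $p_\epsilon^\alpha$ via H\"{o}lder's inequality. Throughout, Proposition \ref{kuka200} will be invoked to absorb the terms $\|p_\epsilon\|_{L^1(\Omega)}$ and $\|\mathbf u\|_{W^{1,2}_0(\Omega)}$ that appear on the right-hand sides of those two lemmas into the constant $c_e$.

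Concretely, I would fix a cutoff $\zeta\in C^\infty(\overline\Omega)$ with $\zeta\equiv 1$ on $\Omega_{t/4}$ and $\mathrm{supp}\,\zeta\subset \Omega_{t/2}$, and set $\eta=1-\zeta$; then $\eta\in C^\infty_0(\Omega)$ and $\zeta$ satisfies the hypothesis of Lemma \ref{olga16}. Split
\begin{equation*}
\int_\Omega\frac{p_\epsilon^\alpha}{|x-x_0|}\,dx=\int_\Omega\frac{\eta\,p_\epsilon^\alpha}{|x-x_0|}\,dx+\int_\Omega\frac{\zeta\,p_\epsilon^\alpha}{|x-x_0|}\,dx.
\end{equation*}
For the interior term I would write $\eta\,p_\epsilon^\alpha/|x-x_0|=(\eta\,p_\epsilon/|x-x_0|)^{\alpha}(\eta/|x-x_0|)^{1-\alpha}$ and apply H\"{o}lder with exponents $1/\alpha$ and $1/(1-\alpha)$: the first factor is controlled by Lemma \ref{olga23}, while the second is controlled by the uniform Newton-potential bound $\sup_{x_0\in\Omega}\int_\Omega|x-x_0|^{-1}\,dx\leq c$.

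For the boundary term I would select exponents $\beta_1\in(0,1)$ and $\beta_2\in(0,3)$ satisfying $\alpha\beta_1+(1-\alpha)\beta_2=1$; this is always possible for $\alpha\in(0,1)$, e.g.\ by taking $\beta_1=1-\varepsilon$ and $\beta_2=1+\alpha\varepsilon/(1-\alpha)$ for $\varepsilon>0$ small enough that $\beta_2<3$. Writing $\zeta\,p_\epsilon^\alpha/|x-x_0|=(\zeta\,p_\epsilon/|x-x_0|^{\beta_1})^{\alpha}(\zeta/|x-x_0|^{\beta_2})^{1-\alpha}$ and again applying H\"{o}lder with exponents $1/\alpha$ and $1/(1-\alpha)$ reduces the boundary estimate to Lemma \ref{olga16} (applied with the admissible exponent $\beta_1\in(0,1)$) together with $\sup_{x_0\in\Omega}\int_\Omega|x-x_0|^{-\beta_2}\,dx\leq c$, which is valid because $\beta_2<3$.

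I do not foresee a serious obstacle: the only point requiring minor care is the algebraic verification that one may simultaneously enforce $\beta_1<1$ and $\beta_2<3$ along the line $\alpha\beta_1+(1-\alpha)\beta_2=1$, which is elementary since the line meets the open region $(0,1)\times(0,3)$ for every $\alpha\in(0,1)$. Summing the interior and boundary contributions then yields \eqref{olga29}.
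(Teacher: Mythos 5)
Your proposal is correct and follows essentially the same route as the paper: decompose with a boundary cutoff, invoke Lemma \ref{olga16} and Lemma \ref{olga23}, convert the resulting linear-in-$p_\epsilon$ bounds to the $p_\epsilon^\alpha$ bound by a convexity inequality, and absorb $\|p_\epsilon\|_{L^1(\Omega)}$ and $\|\mathbf u\|_{W^{1,2}_0(\Omega)}$ via Proposition \ref{kuka200}. The only (cosmetic) differences are that the paper first estimates $\int p_\epsilon|x-x_0|^{-\alpha}\,dx$ and then applies Young's inequality pointwise, whereas you decompose first and use H\"older on each piece; and that by taking $\beta_1=\alpha$ (hence $\beta_2=1+\alpha<3$) you could dispense with the auxiliary $\varepsilon$ entirely, which is exactly the exponent pair the paper's Young step produces.
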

\begin{proof}
Choose a nonnegative function $\zeta\in C^\infty(\Omega)$ such that
$\zeta$ equals $1$ in a neighborhood of $\partial\Omega$ and $\zeta$
vanishes in $\Omega\setminus \Omega_{t/2}$. In particular, we have
$1-\zeta\in C^\infty_0(\Omega)$. Applying Lemmas \ref{olga16} and
\ref{olga23} we obtain
\begin{equation}\label{olga30}\begin{split}
\int_\Omega\frac{ p_\epsilon(\varrho)}{|x-x_0|^\alpha}\,dx\leq
\int_\Omega\frac{\zeta p_\epsilon(\varrho)}{|x-x_0|^\alpha}\, dx +
c \int_\Omega\frac{(1-\zeta) p_\epsilon(\varrho)}{|x-x_0|}\, dx\leq\\
c\big(\|p_\epsilon(\varrho)\|_{L^1(\Omega)}+\|\varrho|\mathbf
u|^2\|_{L^1(\Omega)}+\|\mathbf u\|_{W^{1,2}_0(\Omega)}+1\big).
\end{split}\end{equation}
On the other hand, the Young inequality implies
\begin{equation*}\begin{split}
\frac{ p_\epsilon(\varrho)^\alpha}{|x-x_0|}=\Big(\frac{
p_\epsilon(\varrho)}{|x-x_0|^{\alpha}}\Big)^{\alpha} \Big(\frac{
1}{|x-x_0|^{1+\alpha}}\Big)^{1-\alpha}\leq \frac{
cp_\epsilon(\varrho)}{|x-x_0|^{\alpha}}+\frac{
c}{|x-x_0|^{1+\alpha}}.
\end{split}\end{equation*}
Integrating both sides over $\Omega$ and noting that $1+\alpha\leq
2$ we obtain
\begin{equation*}\begin{split}
\int_\Omega\frac{ p_\epsilon(\varrho)^\alpha}{|x-x_0|}\, dx\leq
c\int_\Omega\frac{ p_\epsilon(\varrho)}{|x-x_0|^{\alpha}}\, dx +c.
\end{split}\end{equation*}
Combining this result with \eqref{olga30} we arrive at
\begin{equation*}
\int_\Omega\frac{ p_\epsilon(\varrho)^\alpha}{|x-x_0|}\,dx\leq
c\big(\|p_\epsilon(\varrho)\|_{L^1(\Omega)}+\|\varrho|\mathbf
u|^2\|_{L^1(\Omega)}+\|\mathbf u\|_{W^{1,2}_0(\Omega)}+1\big).
\end{equation*} It remains to note that
$$
\|p_\epsilon(\varrho)\|_{L^1(\Omega)}+\|\mathbf
u\|_{W^{1,2}_0(\Omega)}\leq c_e,
$$
and the lemma follows.
\end{proof}

We are now in a position to complete the proof of Proposition
\ref{olga31}. In view of \eqref{anna100} we have
$$
2\gamma^{-1}s-(3-s)=2(1-8\theta)(1+2\theta^2)-2+2\theta^2=
2\theta(3\theta+16\theta^2-8)<0
$$
since $\theta<1/8$. Set    $\alpha:=2\gamma^{-1}s/(3 -s)\in (0,1)$.
It follows from Lemma \ref{sura7e}, inequality \eqref{olga29}, and
estimate \eqref{kira101} that
\begin{equation*}
    \int_\Omega p_\epsilon(\varrho)^\alpha|\mathbf u|^2\,dx\leq c
\big(\|\varrho|\mathbf u|^2\|_{L^1(\Omega)}+1\big)\|\mathbf
u\|_{W^{1,2}_0(\Omega)}^2\leq  c \big(\|\varrho|\mathbf
u|^2\|_{L^1(\Omega)}+1\big).
\end{equation*}
On the other hand, we have
$$
\varrho^{2s/(3-s)}=  \varrho^{\alpha\gamma}\leq c
p_\epsilon(\varrho)^\alpha.
$$
We thus get
\begin{equation}\label{olga33}
     \int_\Omega \varrho^{2s/(3-s)}|\mathbf u|^2\,dx\leq c
     \big(\|\varrho|\mathbf
u|^2\|_{L^1(\Omega)}+1\big).
\end{equation}
Notice that $$\varrho^s|\mathbf u|^{2s}=\big(
\varrho^{2s/(3-s)}\big)^{(3-s)/2}\, \big(|u|^6\big)^{(s-1)/2}. $$
Applying the H\"{o}lder inequality and using \eqref{olga33} we
obtain
\begin{equation*}
    \int_\Omega \varrho^s|\mathbf u|^{2s}\leq
    \Big(\int_\Omega \varrho^{2s/(3-s)}|\mathbf u|^2\,dx\Big)^{(3-s)/2}
    \Big(\int_\Omega |u|^6\, dx\Big)^{(s-1)/2}.
\end{equation*}
It follows from the embedding theorem that
$$
\|\mathbf u\|_{L^6(\Omega)}\leq c\|\mathbf
u\|_{W^{1,2}_0(\Omega)}\leq c_e,
$$
which yields
\begin{equation}\label{olga34}
    \int_\Omega \varrho^s|\mathbf u|^{2s}\, dx\leq
   c \Big(\int_\Omega \varrho^{2s/(3-s)}|\mathbf u|^2\,dx\Big)^{(3-s)/2}.
\end{equation}
Inserting  \eqref{olga33} into \eqref{olga34} we arrive at the
inequality
\begin{equation}\label{olga35}
    \int_\Omega \varrho^s|\mathbf u|^{2s}\,dx\leq\big(\|\varrho|\mathbf
u|^2\|_{L^1(\Omega)}+1\big)^{(3-s)/2}.
\end{equation}
Notice that
$$
\|\varrho|\mathbf u|^2\|_{L^1(\Omega)}+1\leq c\Big(\int_\Omega
\varrho^s|\mathbf u|^{2s}\, dx\Big)^{1/s}+1\leq c\Big(\int_\Omega
\varrho^s|\mathbf u|^{2s}\, dx+1\Big)^{1/s}.
$$
Substituting this inequality into \eqref{olga35} we finally obtain
\begin{equation*}
    \int_\Omega \varrho^s|\mathbf u|^{2s}\,dx\leq
c\Big( \int_\Omega\varrho^s|\mathbf u|^{2s}\,
dx+1\Big)^{(3-s)/(2s)}.
\end{equation*}
It remains to note that $3-s<2s$ and the proposition follows.

{\bf Proof of Theorem \ref{inna11}.} It suffices to note that
estimate \eqref{inna10} is a straightforward consequence of
Propositions \ref{kuka200} and \ref{olga31}.

\appendix
\section{Proof of Lemmas \ref{olga2} and
\ref{olga10}}\label{appendix}
\renewcommand{\theequation}{7.\arabic{equation}}
\setcounter{equation}{0}

{\bf Proof of Lemma \ref{olga2}.}  The first estimate in
\eqref{olga3a} is obvious. In order to prove the second estimate
notice that
\begin{equation}\label{olga4}
 \frac{\partial\xi_i}{\partial x_j}(x)=P(x,x_0)n_i(x)n_j(x)+Q_j(x,
 x_0)n_i(x)+R(x,x_0)\frac{\partial n_i}{\partial x_j}(x),
\end{equation}
where
\begin{subequations}\label{olga5}
\begin{gather}\label{olga5a}
P(x,x_0)=
\frac{(1-\alpha)|\varphi(x)-\varphi(x_0)|+|x-x_0|^2}{\Delta_-(x,x_0)^{1+\alpha}}+
\frac{(1-\alpha)|\varphi(x)+\varphi(x_0)|+|x-x_0|^2}{\Delta_+(x,x_0)^{1+\alpha}},\\
\label{olga5b} Q_j(x,x_0) =-2\alpha
\Big(\frac{\varphi(x)-\varphi(x_0)}{\Delta_-(x,x_0)^{1+\alpha}}+
\frac{\varphi(x)+\varphi(x_0)}{\Delta_+(x,x_0)^{1+\alpha}}\Big)\,(x-x_0)_j,\\
\label{olga5c} R(x,x_0)=
\frac{\varphi(x)-\varphi(x_0)}{\Delta_-(x,x_0)^\alpha}\,+\,
\frac{\varphi(x)+\varphi(x_0)}{\Delta_+(x,x_0)^\alpha}.
\end{gather}
\end{subequations}
It follows that
\begin{equation}\label{olga6}
(1-\alpha)\Big(\frac{1}{\Delta_-^\alpha}+\frac{1}{\Delta_+^\alpha}\Big)
\leq P(x,x_0)\leq
\Big(\frac{1}{\Delta_-^\alpha}+\frac{1}{\Delta_+^\alpha}\Big).
\end{equation}
\begin{equation}\label{olga7}
    |R|\leq c.
\end{equation}
Next we have
\begin{multline*}
    |\varphi(x)\pm\varphi(x_0)||(x-x_0)_j|\leq c |\varphi(x)\pm\varphi(x_0)|^{1/2}
    |x-x_0|\leq\\ c|\varphi(x)\pm\varphi(x_0)|+c|x-x_0|^2\leq
    c\Delta_\pm(x,x_0).
\end{multline*}
From this and \eqref{olga5b} we obtain
\begin{equation*}
    |Q_j|\leq c\Big(\frac{1}{\Delta_-^\alpha}+
    \frac{1}{\Delta_+^\alpha}\Big)
\end{equation*}
Inserting this inequality and inequalities \eqref{olga6},
\eqref{olga7} into \eqref{olga4} we arrive at \eqref{olga3a}. Next,
\eqref{olga4} and \eqref{olga5} imply
\begin{equation}\label{olga8}
\frac{\partial\xi_i}{\partial x_j} u_iu_j =P(\mathbf u\cdot\mathbf
n)^2-(N_-+N_+)((x-x_0)\cdot\mathbf u)(\mathbf n\cdot\mathbf
u)+R\frac{\partial n_i}{\partial x_j}u_iu_j,
\end{equation}
where
$$
N_{\pm}(x, x_0)=2\alpha
\,\big(\varphi(x)\pm\varphi(x_0)\big)\Delta_{\pm}(x,x_0)^{-1-\alpha}.
$$
It follows from this and \eqref{olga6}, \eqref{olga7} that
\begin{equation}\label{olga9}\begin{split}
\frac{\partial\xi_i}{\partial x_j} u_iu_j \geq
(1-\alpha)\Big(\frac{1}{\Delta_-^\alpha}+
\frac{1}{\Delta_+^\alpha}\Big)(\mathbf n\cdot\mathbf u)^2-\\( |
N_-|+|N_+|)|x-x_0||\mathbf u||\mathbf n\cdot\mathbf u|-c |\mathbf
u|^2. \end{split}\end{equation} It follows from the expression for
$N_\pm$ and the Cauchy inequality that
\begin{equation*}\begin{split}
    ( |
N_-|+|N_+|)|x-x_0||\mathbf u||\mathbf n\cdot\mathbf
u|\leq\\
c\Big(\frac{|\varphi(x)-\varphi(x_0)||x-x_0|}{\Delta_-^{1+\alpha}}
+\frac{|\varphi(x)+\varphi(x_0)||x-x_0|}{\Delta_+^{1+\alpha}}\Big)
|\mathbf u||\mathbf n\cdot\mathbf u|\leq\\
\frac{1-\alpha}{2}\Big(\frac{|x-x_0|^2}{\Delta_-^{1+\alpha}}
+\frac{|x-x_0|^2}{\Delta_+^{1+\alpha}}\Big)(\mathbf n\cdot\mathbf
u)^2+\\c
\Big(\frac{|\varphi(x)-\varphi(x_0)|^2}{\Delta_-^{1+\alpha}}
+\frac{|\varphi(x)+\varphi(x_0)|^2}{\Delta_+^{1+\alpha}}\Big)|\mathbf
u|^2.
 \end{split}\end{equation*}
Notice that
$$
\frac{|x-x_0|^2}{\Delta_-(x,x_0)}\leq 1, \quad
\frac{|x-x_0|^2}{\Delta_+(x,x_0)}\leq 1
$$
and
$$
\frac{|\varphi(x)-\varphi(x_0)|^2}{\Delta_-(x,x_0)^{1+\alpha}}
+\frac{|\varphi(x)+\varphi(x_0)|^2}{\Delta_+(x,x_0)^{1+\alpha}}\leq
c.
$$
We thus get
\begin{equation*}\begin{split}
    ( |
N_-|+|N_+|)|x-x_0||\mathbf u||\mathbf n\cdot\mathbf u|\leq
\frac{1-\alpha}{2}\Big(\frac{1}{\Delta_-^{\alpha}}
+\frac{1}{\Delta_+^{\alpha}}\Big)(\mathbf n\cdot\mathbf
u)^2+c|\mathbf u|^2.
 \end{split}\end{equation*}
Inserting this inequality into \eqref{olga9} we arrive at the
desired inequality \eqref{olga3c}. It remains to prove
\eqref{olga3b}. To this end set $\mathbf u=\mathbf e_k$, where
$\mathbf e_k$ is a vector of the canonical basis in $\mathbf R^3$.
Substituting $\mathbf u$ into \eqref{olga3c} we obtain
\begin{equation*}
    \frac{\partial\xi_k}{\partial x_k}(x)  \geq
\frac{1-\alpha}{2}
\Big(\frac{1}{\Delta_-^\alpha}+\frac{1}{\Delta_+^\alpha}\Big)
n_k^2-c |\mathbf u|^2.
\end{equation*}
Summing both sides over $k$ and noting that $|\mathbf n|=1$ we
obtain \eqref{olga3b}. This completes the proof.

{\bf Proof of Lemma \ref{olga10}.} We begin with the observation
that $\Delta_-(x,x_0)\leq \Delta_+(x,x_0)$ for all $x,x_0\in
\Omega_t$. From this and \eqref{olga3a} we conclude that
$$
|\nabla\boldsymbol\xi(x)|^2\leq c
\Big(\frac{1}{\Delta_-(x,x_0)^{2\alpha}}+1\Big).
$$
Recall that $\Omega_t\subset A_t$. Hence it suffices to prove that
\begin{equation}\label{olga12}
    \int_{A_t}\Delta_-(x,x_0)^{-2\alpha}\, dx\leq c \text{~~for
    all~~} x_0\in A_t.
\end{equation}
To this end fix an arbitrary $x_0\in A_t$ and denote by $B_t$ the
ball $\{|x-x_0|<t\}$. We have
\begin{equation}\label{olga13}
    \int_{A_t}\Delta_-(x,x_0)^{-2\alpha}\, dx\leq
 \int_{B_t}\Delta_-(x,x_0)^{-2\alpha}\, dx+ \int_{A_t\setminus B_t}
 \Delta_-(x,x_0)^{-2\alpha}\, dx.
\end{equation}
It is easily seen that $\Delta_-(x,x_0)\geq t^2$ for all $x\in
A_t\setminus B_t$, which leads to the estimate
\begin{equation}\label{olga14}
\int_{A_t\setminus B_t}
 \Delta_-(x,x_0)^{-2\alpha}\, dx\leq
 ct^{-4\alpha}\text{~meas~}A_t\leq c.
\end{equation}
Recall that  $B_t\subset A_{2t}$ and that the function $\varphi$
belongs to the class $ C^2(\bar A_{2t}) $. From this and the Taylor
formula we obtain
$$
\varphi(x)-\varphi(x_0)=\mathbf n_0\cdot (x-x_0)+D(x,
x_0)\text{~~for~~} x\in B_t,
$$
where $\mathbf n_0=\mathbf n(x_0)=\nabla\varphi(x_0)$ and the
remainder admits the estimate
$$
|D(x,x_0)|\leq m|x-x_0|^2,\text{~~where~~} m=\sup\limits_{x\in
A_{2t}}|\nabla^2 \varphi(x)|.
$$
It follows that
\begin{equation*}\begin{split}
(m+1)\Delta_-(x,x_0)\geq |\varphi(x)-\varphi(x_0)|+
(m+1)|x-x_0|^2\geq \\|\mathbf n_0\cdot
(x-x_0)|-|D(x,x_0)|+(m+1)|x-x_0|^2\geq |\mathbf n_0\cdot
(x-x_0)|+|x-x_0|^2.
\end{split}\end{equation*}
 Introduce the orthogonal projection $P_0=\mathbb
I-\mathbf n_0\otimes \mathbf n_0$. The trivial relation
$|P_0(x-x_0)|\leq |x-x_0|$ leads to the inequality
$$
\Delta_-(x,x_0)\geq (m+1)^{-1} \big(\,|\mathbf n_0\cdot
(x-x_0)|+|P_0(x-x_0)|^2\,).
$$
We thus get
\begin{equation}\label{olga15}
\int_{B_t}\Delta_-(x,x_0)^{-2\alpha}\, dx\leq c\int_{B_t} \big(\,
|\mathbf n_0\cdot (x-x_0)| +|P_0(x-x_0)|^2\,\big)^{-2\alpha}\, dx.
\end{equation}
Now choose an orthogonal basis $(\mathbf b_i)$, $i=1,2,3$, in
$\mathbb R^3$ such that $\mathbf b_3=\mathbf n_0$. We have
$$
x-x_0=\sum_iy_i\mathbf b_i, \quad y_3=\mathbf n_0\cdot (x-x_0),
\quad |P_0(x-x_0)|^2=y_1^2+y_2^2,
$$
which yields
\begin{equation*}\begin{split}
\int_{B_t} \big(\, |\mathbf n_0\cdot (x-x_0)|
+|P_0(x-x_0)|^2\,\big)^{-2\alpha}\, dx=\int_{|y|\leq
t}(|y_3|+y_1^2+y_2^2)^{-2\alpha}\, dy\\\leq \int_{-t}^{t}
\int_{y_1^2+y_2^2<t^2}(|y_3|+y_1^2+y_2^2)^{-2\alpha}\, dy= 4\pi
\int_{0}^{t} \int_0^t(z+r^2)^{-2\alpha}\,rdr dz=\\
2\pi\int_{0}^{t} \int_0^{t^2}(z+v)^{-2\alpha}\,dzdv\leq c.
\end{split}\end{equation*}
From this and \eqref{olga15} we obtain the inequality
\begin{equation*}
\int_{B_t}\Delta_-(x,x_0)^{-2\alpha}\, dx\leq c,\
\end{equation*}
which, being combined with \eqref{olga13} and \eqref{olga14},
implies the desired estimate \eqref{olga12}.








\end{document}